\newcommand{\R}{\mathbb{R}} 
\newcommand{\Q}{\mathbb{Q}}
\newcommand{\Sph}{\mathbb{S}}
\newcommand{\eps}{\varepsilon}
\newcommand{\dif}{\mathrm{d}} 
\newcommand{\Dif}{\mathrm{D}}
\newcommand{\linspan}{\mathrm{span}}
\newcommand{\cobg}{C^{0,\beta}_{\delta - 2}}
\newcommand{\ctbg}{C^{2,\beta}_\delta}
\newcommand{\clbg}{C^{k,\beta}_\delta}
\newcommand{\cobgs}{C^{0,\beta}_{\delta - 2, \mathit{sym} }}
\newcommand{\ctbgs}{C^{2,\beta}_{\delta, \mathit{sym} }}
\newcommand{\btheta}{\boldsymbol{\Theta}}
\newcommand{\approxsol}{\tilde \Lambda[\alpha, \Gamma,  \tau, \vec \sigma]}
\newcommand{\difop}{\Phi_{\tau, \vec \sigma}}
\newcommand{\linop}{{\mathcal L}_{\tau, \vec \sigma}}
\newcommand{\rightinv}{{\mathcal R}_{\tau, \vec \sigma}}
\DeclareMathOperator{\arccosh}{arccosh}
\newtheorem{mainthm}{Main Theorem}
\newtheorem{thm}{Theorem}
\newtheorem{lemma}[thm]{Lemma}
\newtheorem{prop}[thm]{Proposition}
\newtheorem*{nonumthm}{Theorem}
\theoremstyle{definition}
\newtheorem{defn}[thm]{Definition}
\newtheoremstyle{rmk}{5pt}{5pt}{}{}{\scshape}{:}{.5em}{}
\theoremstyle{rmk}
\newtheorem*{rmk}{Remark}
\newcommand{\mylabel}
\begin{document}

\title{Constant Mean Curvature Hypersurfaces in $\Sph^{n+1}$ \\ by Gluing Spherical Building Blocks}

\author{
Adrian Butscher \\ University of Toronto at Scarborough \\ email: \ttfamily butscher@utsc.utoronto.ca
}
\maketitle

\begin{abstract}
	The techniques developed by Butscher in \cite{me5} for constructing constant mean curvature (CMC) hypersurfaces in $\Sph^{n+1}$ by gluing together spherical building blocks are generalized to handle less symmetric initial configurations.  The outcome is that the approximately CMC hypersurface obtained by gluing the initial configuration together can be perturbed into an exactly CMC hypersurface only when certain global geometric conditions are met.  These \emph{balancing conditions} are analogous to those that must be satisfied in the `classical' context of gluing constructions of CMC hypersurfaces in Euclidean space, although they are more restrictive in the $\Sph^{n+1}$ case.  An example of an initial configuration is given which demonstrates this fact; and another example of an initial configuration is given which possesses no symmetries at all.   
\end{abstract}

%\tableofcontents

\renewcommand{\baselinestretch}{1.25}
\normalsize

\section{Introduction}

\paragraph{Gluing constructions of constant mean curvature hypersurfaces.}
A constant mean curvature (CMC) hypersurface $\Lambda$ contained in an ambient Riemannian manifold $M$ of dimension $n+1$ has the property that its mean curvature with respect to the induced metric is constant.  This property ensures that $n$-dimensional area of $\Lambda$ is a critical value of the area functional for hypersurfaces of $M$ subject to an enclosed-volume constraint.  Constant mean curvature hypersurfaces have been objects of great interest since the beginnings of modern differential geometry.  One very important method for constructing CMC hypersurfaces is the \emph{gluing technique} in which a more complex CMC hypersurface is built up from simple CMC building blocks. This technique was pioneered by Kapouleas in the context of CMC hypersurfaces in $\R^3$ \cite{kapouleas5,kapouleas2,kapouleas3}.  The idea is that a very good approximation of a CMC hypersurface can be constructed by forming the connected sum of an initial configuration of simple CMC building blocks, which can then be perturbed to an exactly CMC hypersurface if certain global geometric conditions, called \emph{balancing conditions}, are satisfied by the initial configuration.  

The gluing technique has been a very successful method for constructing CMC hypersurfaces in $\R^3$, with the proviso that the resulting hypersurfaces are always small perturbations of the simple building blocks from which they are constructed, namely spheres and nearly singular truncated Delaunay surfaces.  This is because the quality of the approximate solution that one can construct improves as the approximate solution more and more closely resembles a union of mutually tangent spheres.  Although it is easy to imagine how to use the gluing technique in ambient manifolds other than $\R^3$, provided one has enough simple building blocks, it is not clear that the gluing technique will be quite a successful, in particular when the ambient manifold is compact.

In Butscher's and Butscher-Pacard's work \cite{mepacard1,mepacard2,me5}, the gluing technique for constructing CMC hypersurfaces has been successfully adapted to work in the compact ambient manifold $\Sph^{n+1}$.   In these papers, the CMC building blocks of the sphere --- namely the hyperspheres obtained by intersecting $\Sph^{n+1}$ with hyperplanes and the product spheres of the form $\Sph^p(\cos(\alpha)) \times \Sph^q( \sin(\alpha))$ for $\alpha \in (0, \pi/2)$ called the generalized Clifford tori --- are configured in a variety of different ways, glued together using small embedded catenoidal necks, and perturbed into CMC hypersurfaces.  One should imagine that the hypersurfaces constructed in these papers are analogues of the `classical' constructions that are possible in Euclidean space.  As before, there are obstructions for solving the CMC equation on an arbitrary initial configuration; and when certain global geometric conditions are met, the obstructions disappear.  These geometric conditions are indeed close analogues of the balancing conditions identified by Kapouleas; but the conditions seem to be stronger in the $\Sph^{n+1}$ case than in the Euclidean case.  This is to be expected since $\Sph^{n+1}$ is compact and the additional requirement that the initial configurations must close should have ramifications in the analysis of the CMC equation.

The balancing condition is best explained in the more general context found in Korevaar-Kusner-Solomon's work \cite{kks}.   First, suppose that $\Lambda$ is a hypersurface with constant mean curvature $h$ in an $(n+1)$-dimensional Riemannian manifold $(M, g)$ possessing a Killing field $V$.  Let $\mathcal U$ be an open set in $\Lambda$ and $\bar{\mathcal U}$ be an open set in $M$ such that $\partial \bar{ \mathcal U} = \partial \mathcal U \cup C$ where $C$ is a bounded $n$-dimensional cap which may have multiple components.  Then the first variation formula for the $n$-volume of $\mathcal U$ subject to the constraint of constant enclosed $(n+1)$-volume of $\bar{\mathcal U}$ in the direction of the variation determined by $V$ implies 
\begin{equation}
	\mylabel{eqn:kksbalancing}
	\int_{\partial \mathcal U} g(\nu, V) - h \int_C g(\eta, V) = 0
\end{equation}
where $\nu$ is the unit normal vector field of $\partial \mathcal U$ in $\Lambda$ and $\eta$ is the unit normal vector field of $C$ in $M$.  This formula can now applied to the approximate solution of the CMC perturbation problem, having mean curvature approximately equal to $h$, in the following way.  Choose the open set $\mathcal U$ as one of the building blocks of the approximate solution.  Then $\partial \mathcal U$ consists of a disjoint union of small $(n-1)$-spheres at the centres of the necks attaching $\mathcal U$ to its neighbours, and $C$ is the disjoint union of the small disks that cap these spheres off.  The left hand side of \eqref{eqn:kksbalancing} now encodes information about the width and location of the neck regions of $\mathcal U$.  If the left hand side of \eqref{eqn:kksbalancing} is sufficiently close to zero, then one says that $\mathcal U$ is in \emph{balanced position}.  The idea is now that in order to be able to overcome the obstructions to the solvability of the constant mean curvature equations,  the approximate solution must be constructed in such a way that all its building blocks are in balanced position.

The balancing condition amounts to a form of local symmetry satisfied by each building block with respect to its nearest neighbours in the initial configuration that is to be glued together.  This is similar to what happens in Euclidean space.   However, force balancing in itself is not the end of the story --- a balanced approximate solution can not necessarily be perturbed to an exactly CMC hypersurface.  It is in addition necessary to be able to re-position the various building blocks with respect to each other so as to maintain the force balancing condition even under small perturbations.  Technically speaking, this amounts to the requirement that the mapping taking a re-positioned approximate solution to a set of small real numbers via the integrals on the left hand side of \eqref{eqn:kksbalancing} be surjective.  This requirement also exists in the Euclidean case, but is more restrictive in the case of $\Sph^{n+1}$.   In fact, only by imposing a high degree of symmetry on their initial configurations are Butscher and Butscher-Pacard able to satisfy both types of obstruction to the solvability of the CMC equation.

One impression that the reader might have, after studying the implementation of the gluing technique in $\Sph^{n+1}$ presented in Butscher and Butscher-Pacard's papers, is that it might not possible to construct CMC hypersurfaces in $\Sph^{n+1}$ that are not very symmetric.  Indeed, the totality of local symmetry conditions imposed by force balancing and the fact that CMC hypersurfaces in $\Sph^{n+1}$ must close seems to force a degree of global symmetry on the initial configuration; and the methods developed in \cite{me5} do not seem to apply perfectly to initial configurations with small symmetry groups.  This situation, if it were true, would be in contrast to the Euclidean case. 

However, the impression outlined above is false.  The purpose of this present paper is twofold: to show that the balancing technique applied to spherical building blocks, as presented in \cite{me5}, can be generalized to handle initial configurations with lesser symmetry; and that there exist initial configurations of hyperspheres to which this technique can be applied.  Nevertheless, it remains the case that the conditions guaranteeing the existence of CMC hypersurfaces constructed from spherical building blocks are more restrictive than in the Euclidean case, and constructions which are possible in Euclidean space are impossible to achieve using the gluing technique in $\Sph^{n+1}$.  Examples will be presented in Section \ref{sec:examples} to demonstrate both of these facts.

\paragraph{Statement of results.} 
The theorem that will be proved in this paper can be explained as follows.  Let $\Gamma := \{ \gamma_1, \ldots, \gamma_L \}$ be a set of oriented geodesic segments with the property that the one-dimensional variety $\bigcup_s \gamma_s$ has no boundary.  Without loss of generality: the points of contact between any two segments are always amongst the endpoints of the geodesics; and two segments are never parallel whenever they meet.  Thus the endpoints of each geodesic segment $\gamma_s$ make contact with at least two other segments.  Let $\{ p_1, \ldots, p_M \}$ be the set of all endpoints of the geodesic segments and for each $p_s$ let $T_{1, s}, \ldots, T_{N_s, s} \in T_{p_s} \Sph^{n+1}$ be the unit tangent vectors of the geodesics emanating from $p_s$.  Now position hyperspheres of radius $\cos(\alpha)$ separated by a distance $\tau_s$ along each of the geodesics, perhaps winding multiple times around $\Sph^{n+1}$.  Note that there is a transcendental relationship between the $\tau_s$ and the number of windings around $\gamma_s$ that must be satisfied for this to be possible.  Denote this initial configuration of hyperspheres by $\Lambda_ {\Gamma,\tau} ^\#$.  

In  Section \ref{sec:assembly} a procedure will be developed for gluing the hyperspheres in $\Lambda_{\Gamma,\tau}^\#$ together by embedding small catenoidal necks between each pair of hyperspheres to form an \emph{approximate solution}, denoted $\tilde \Lambda_ {\Gamma,\tau}$, of the CMC deformation problem.  It will be shown in Section \ref{sec:balancing} that the various hyperspheres in $\Lambda_ {\Gamma,\tau} ^\#$ are in balanced position if
\begin{equation}
	\label{eqn:balcond}
	\sum_{j=1}^{N_s} \eps_{j,s}^{n-1} T_{j,s} = 0
\end{equation}
for each point $p_s$, where $\eps_{j,s}$ is a parameter related to the separation parameter $\tau_{j,s}$ along the geodesic whose tangent vector is $T_{j,s}$.  (Actually, $\eps_{j,s}$ is the width of the neck connecting the hypersphere at $p_s$ to its neighbour in the direction of $T_{j,s}$.  The relation with $\tau_{j,s}$ will be established during the description of the gluing process).

\begin{mainthm} 
	\label{result1}
	Let $\Lambda_{\Gamma,\tau}^\#$ be the initial configuration of hyperspheres described above.  Suppose that balancing condition \eqref{eqn:balcond} holds and also that the mapping between finite-dimensional vector spaces which takes small displacements of the geodesics forming $\Lambda_{\Gamma,\tau}^\#$ to the quantity given by the left hand side of \eqref{eqn:balcond} has full rank.  If $\tau$ is sufficiently small, then $\tilde \Lambda_{\Gamma,\tau}$ can be perturbed into an exactly CMC hypersurface $\Lambda_{\Gamma, \tau}$.  This hypersurface can be described as a normal graph over $\tilde \Lambda_{\Gamma, \tau}$ where the graphing function has small $C^{2,\beta}$-norm.  In particular, $\Lambda_{\Gamma, \tau}$ is embedded if and only if $\tilde \Lambda_{\Gamma, \tau}$ is embedded.
\end{mainthm}

The proof of this theorem will follow broadly the same lines as Main Theorem 2 in Butscher's paper \cite{me5}.  That is, it will be shown that the partial differential equation for the graphing function  whose solution gives a CMC perturbation of $\tilde \Lambda_{\Gamma, \tau}$ can be solved up to a error term belonging to a finite dimensional obstruction space spanned by the approximate Jacobi fields of $\tilde \Lambda_{\Gamma, \tau}$ (as explained more fully in \cite{me5} and in the proof below).  Then it will be shown that the balancing conditions given in the theorem above are sufficient to eliminate the error term.

\paragraph{Acknowledgements.}
I would like to thank Frank Pacard for suggesting this problem to me, providing invaluable guidance to me during its completion, and showing me excellent hospitality during my visits to Paris.  I would also like to thank Rob Kusner, Rafe Mazzeo, Jesse Ratzkin and Rick Schoen for their support and assistance.

\section{Construction of the Approximate Solution}
\label{sec:assembly}

\subsection{The Initial Configuration of Hyperspheres}

Write $\R^{n+2}$ as $\R \times \R^{n+1}$ and give it the coordinates $(x^0, x^1, \ldots, x^{n+1})$.  Consider the hypersphere $$S_\alpha := \{ x \in \R^{n+2} \: : \: x^0 = \cos{\alpha} \:\: \mbox{and} \:\: (x^1)^2 + \cdots + (x^{n+1})^2 = \sin^2(\alpha) \} \, .$$ This hypersphere has constant mean curvature $H_\alpha$.   An arbitrary configuration of rotated copies of $S_\alpha$ positioned along geodesic segments can be defined concretely as follows.  

First let $\Gamma := \{ \gamma_1, \ldots, \gamma_L \}$ be a set of oriented geodesic segments with the property that the one-dimensional variety $\bigcup_s \gamma_s$ has no boundary.  Without loss of generality: the points of contact between any two segments are always amongst the endpoints of the geodesics; and two segments are never parallel whenever they meet.  Thus the endpoints of each geodesic segment $\gamma_s$ make contact with at least two other segments.   Let $|\gamma_s|$ be the length of $\gamma_s$ and use $\gamma_s (t)$ to denote the point on $\gamma_s$ lying a distance $t$ from its starting point.  Hence $t \longmapsto \gamma_s(t)$ is the arc length parametrization of $\gamma_s$.   Suppose that there is one fixed $\alpha \in (0,\pi/2)$ along with positive integers $N_s$ and $m_s$ and small separation parameters $\tau_s >0$ so that $|\gamma_s| + 2 \pi m_s = N_s (2 \alpha + \tau_s)$ for each $s = 1, \ldots, L$.   

Define the points $\mathring p_{sk} := \gamma_s ( k(2 \alpha + \tau_s) ) $ as well as the hyperspheres $\mathring{S}_{\alpha}^{sk} := \partial B_\alpha(\mathring p_{sk})$.  Thus the $\mathring{S}_{\alpha}^{sk} $ for $k = 0, \ldots, N_s$ are a collection of $N_s$  hyperspheres of the same mean curvature winding around the geodesic $\gamma_s$ a number $m_s$ times and separated from each other by a distance $\tau_s$.  The proof of the Main Theorem will in addition  require small displacements of the hyperspheres above from these `equilibrium' positions.  To this end, introduce the small displacement parameters $\vec \sigma_{sk} \in T_{\mathring p_{sk}} \Sph^{n+1}$.  Now define the points $p_{sk} := \exp_{\mathring p_{sk}} (\vec \sigma_{sk})$ as well as the hyperspheres $S_{\alpha}^{sk}[\vec \sigma_{sk}] := \partial B_{\alpha}(p_{sk})$.  To avoid ambiguity, the displacement parameter for any hypersphere corresponding to an endpoint of a geodesics must be unique; this is achieved by setting the appropriate $\vec \sigma_{s0}$ and $\vec \sigma_{s' N_{s'}}$ equal.   One can now define the initial configuration as follows. 

\begin{defn}
	The \emph{initial configuration} of  hyperspheres of mean curvature $H_\alpha$ positioned along the collection of geodesics $\Gamma$ having separation parameters $\tau := \{ \tau_1, \ldots, \tau_L \}$ and displacement parameters $\vec \sigma := \{ \vec \sigma_{10}, \ldots, \vec\sigma_{L N_L}\}$ is defined to be
	$$\Lambda^{\#} [\alpha, \Gamma, \tau, \vec\sigma] := \bigcup_{s=1}^L \bigcup_{k=0}^{N_s} S_\alpha^{sk} [\vec \sigma_{sk} ] \, .$$
	The initial configuration is symmetric with respect to the group $G_\Gamma$. Note that there is redundancy in the labeling above due to the intersections amongst the geodesics at their endpoints.
\end{defn}

Each constituent hypersphere $S^{sk}_\alpha$ in the $\Lambda^{\#}[\alpha, \Gamma, \tau, \vec \sigma]$ has at least two nearest neighbours.  If $k \neq 0, N_s$ then $S^{sk}_\alpha$ is situated near an interior point of the geodesic $\gamma_s$ and thus has exactly two nearest neighbours $S^{s,k-1}_\alpha$ and $S^{s,k+1}_\alpha$ along this geodesic.  If $k = 0$ or $N_s$ then $S^{sk}_\alpha$ is situated near an endpoint of the geodesic $\gamma_s$ and has strictly greater than two nearest neighbours corresponding to hyperspheres of the form $S^{s' k'}_\alpha$ where $s' \in \{0, \ldots, L \} \setminus \{ s \}$ and $k' = 1$ or $N_{s'}-1$.

Finally, one can choose once and for all an $SO(n+2)$-rotation $R_{sk}[\vec \sigma_{sk}]$ taking $S_\alpha^{sk}[\vec \sigma_{sk}]$ to $S_\alpha$ as follows.  First fix a particular $R_{sk} \in SO(n+2)$ take $\mathring S_{\alpha}^{sk}$ to $S_\alpha$ (here, the choice does not matter so long as it is fixed \emph{a priori}).  Then let $\mathcal W_{\vec \sigma_{sk}}$ be the distance-one rotation in the one-parameter family of rotations generated by the $(n+2) \times (n+2)$ anti-symmetric linear transformation given by $W_{\vec \sigma_{sk}} (X) := \langle \vec \sigma_{sk}, X \rangle \mathring p_{sk} - \langle \mathring p_{sk}, X \rangle \vec \sigma_{sk}$ for $X \in \R^{n+2}$.  This is the unique $SO(n+2)$-rotation that coincides with $\exp_{\mathring p_{sk}} (\vec \sigma_{sk})$ at $\mathring p_{sk}$.  Now define
\begin{equation}
	\label{eqn:rotation}
	R_{sk}[\vec \sigma_{sk}] := R_{sk} \circ \mathcal W_{\vec \sigma_{sk}}^{-1} \, .
\end{equation} 
A consequence is that the dependence of $R_{sk}[\vec \sigma_{sk}]$ on $\vec \sigma_{sk}$ is smooth.

\subsection{Symmetries}

Let $G_\Gamma$ be the largest subgroup of $O(n+2)$ preserving the collection of geodesics $\Gamma$.   The idea is that $G_\Gamma$ should become the group of symmetries of the CMC hypersurface constructed in the proof of the Main Theorem.  Therefore in all steps leading up to the proof of the Main Theorem, it will be necessary to ensure that invariance with respect to $G_\Gamma$ is preserved.  

The initial configuration $\Lambda^\# [ \alpha, \Gamma, \tau, 0]$ is clearly invariant with respect to $G_\Gamma$ but once non-zero displacement parameters are introduced, this may no longer be so.  To preserve $G_\Gamma$-invariance, it will be necessary to choose only special values of the displacement parameters.  Let $N := \sum_{s=1}^L (N_s+1)$ be the total number of hyperspheres in $\Lambda^\# [ \alpha, \Gamma, \tau, 0]$ so that here are a total of $N$ displacement parameters, each of which belongs to $\R^n$.  Define the set 
\begin{equation}
	\label{eqn:displace}
	\mathcal D_{\Gamma} := \left\{   \vec \sigma \in \R^{n} \times \stackrel{\mbox{\tiny $N$ times}}{\cdots\cdots} \times \R^n : \Lambda^{\#}[ \alpha, \Gamma, \tau, \vec \sigma] \: \: \mbox{is $G_\Gamma$-invariant}  \right\} \, .
\end{equation}
Henceforth the condition $\vec \sigma \in \mathcal D_{\Gamma}$ on the displacement parameters  will be assumed.

\subsection{Preliminary Perturbation of the Initial Configuration}
 
Let $\btheta : \Sph^n \rightarrow \R^{n+1}$ be a parametrization of the unit sphere in $\R^{n+1}$.  Then one can parametrize the hypersphere $S_\alpha$ via $\btheta \longmapsto (\cos(\alpha) , \sin(\alpha) \btheta)$.  Furthermore, the displacement by a distance $\sigma$ along the geodesic normal to a point on $S_\alpha$ is found using the exponential map and is given by 
$$
\exp(\sigma N_\alpha)(\btheta) = (\cos(\alpha + \sigma), \sin( \alpha + \sigma) \btheta)
$$
where $N_\alpha$ is the unit outward normal of $S_\alpha$.  Suppose now that $G: \Sph^n \rightarrow \R$ is a function on $\Sph^n$.  Then one can parametrize the \emph{normal graph} over $S_\alpha$ corresponding to $G$ via
$$
\btheta \longmapsto \big( \cos(\alpha + G(\btheta)), \sin(\alpha + G(\btheta)) \btheta \big) \
$$
where $\btheta$ ranges over $\Sph^n$. 

The idea of the first step of the construction of the approximate solution of the CMC deformation problem is to replace each hypersphere $S_\alpha^{sk}[\vec \sigma_{sk}]$ of the initial configuration $\Lambda^{\#}[\alpha, \Gamma, \tau, \vec \sigma]$ by a small perturbation.  This perturbation will be given as the normal graph over $S_\alpha^{sk}[\vec \sigma_{sk}]$ generated by a specific function $G_{sk} : S_\alpha^{sk}[\vec \sigma_{sk}] \rightarrow \R$.  The purpose of this initial perturbation is to give $S_\alpha^{sk}[\vec \sigma_{sk}]$ a catenoidal shape near its gluing points.

To proceed, recall that $S_\alpha^{sk}[\vec \sigma_{sk}] = R_{sk} [\vec \sigma_{sk}]^{-1} (S_\alpha)$.   Let $p_1, \ldots p_K$ be the images under $R_{sk}[\vec \sigma_{sk}]$ of the points on $S_\alpha^{sk}[\vec \sigma_{sk}]$ that are nearest to the neighbours of $S_\alpha^{sk}[\vec \sigma_{sk}]$ amongst the hyperspheres of $\Lambda^{\#}[\alpha, \Gamma, \tau, \vec \sigma]$.  Introduce a small radius parameter $r$ to be determined later and define 
\begin{equation}
	\label{eqn:pertsph}
	\tilde S^{sk}_\alpha[\vec a_{sk}, \vec \sigma_{sk}] := \big( R_{sk}[\vec \sigma_{sk}] \big)^{-1} \circ \exp \big( G_{sk} N_\alpha \big) \left(S_\alpha \setminus \bigcup_{j=1}^K B_{r}(p_j) \right) \, ,
\end{equation}
where $G_{sk} : S_\alpha \setminus \{ p_1, \ldots, p_K\} \rightarrow \R$ is the function determined by the following procedure.  

Write $p_j := ( \cos(\alpha), \sin(\alpha) P_j)$ where $P_j$ are points on $\Sph^n \subseteq \R^{n+1}$.  Let $\mathcal L_{\Sph^n} := \Delta_{\Sph^n} + n$  be the linearized mean curvature operator of $S_\alpha$.  Recall that the smooth kernel of $\mathcal L_{\Sph^n}$ consists of the linear span of the restrictions of the coordinate functions $q^t := x^t \big|_{\Sph^n}$ for $t = 1, \ldots, n$.  Let $\delta (p_{j})$ be the Dirac $\delta$-mass centered at the point $p_{j}$.  Then for each  $\vec a_{sk} := (a_1, \ldots a_K) \in \R^K$, one can find a unique solution $G_{sk} : S_\alpha \setminus \{ p_1, \ldots, p_K\} \rightarrow \R$ of the distributional equation 
\begin{equation}
	\label{eqn:distribeqn}
	\mathcal L_{\Sph^n} (G_{sk} ) = \sum_{j=1}^K a_{j} \left( \delta (p_{j}) - \sum_{t=1}^n\lambda_j^t \, \, \chi \cdot q^t \right)
\end{equation}
that is $L^2$-orthogonal to the smooth kernel of $\mathcal L_{\Sph^n}$.  Here $\chi$ is a cut-off function vanishing in a neighbourhood of each of the $p_j$ that will be defined precisely later, and the $\lambda_j^t \in \R$ are coefficients designed to ensure that the right hand side of \eqref{eqn:distribeqn} is $L^2$-orthogonal to all the $q^t$, thereby guaranteeing the existence of the solution. One can also show that the following asymptotic expansion is valid:
\begin{equation}
	\label{eqn:distribeqnasym}
	G_{sk}  =
	\begin{cases}
		a_j (c_0 + \log \big( \mathrm{dist}(\cdot, p_j) \big) + \mathcal O \big( \mathrm{dist}(\cdot, p_j) \big) &\qquad n=2 \\[1ex]
		\dfrac{a_j}{ \mathrm{dist}(\cdot, p_j)^{n-2}} + \mathcal O \big( \mathrm{dist}(\cdot, p_j) ^{3-n} \big) &\qquad n\geq 3
	\end{cases}
\end{equation}
in a sufficiently small neighbourhood of $p_j$.  Here $c_0$ is a constant.  Refer to $\vec a_{sk}$ as the \emph{asymptotic parameters} of the perturbed hypersphere $\tilde S^{sk}_\alpha[\vec a_{sk}, \vec \sigma_{sk}] $.

\subsection{Canonical Coordinates for a Pair of Hyperspheres} 
\label{sec:stereo}

Let $S = \partial B_{\alpha}(p)$ and $S' = \partial B_\alpha(p')$ be any pair of neighbouring hyperspheres in $\Lambda^{\#}[\alpha, \Gamma, \tau, \vec \sigma]$ and suppose that the separation between them is given by a distance $\tau$.  Let $\gamma$ be the geodesic connecting $p$ to $p'$ and choose an $SO(n+2)$-rotation $R$ that takes $\gamma$ to the $\{ x^0, x^1\}$-equator and the points $p$ and $p'$ to $(\cos(\tau/2), - \sin(\tau/2), 0, \ldots, 0)$ and $(\cos(\tau/2), \sin(\tau/2), 0, \ldots, 0)$  respectively.

Canonical coordinates adapted to the hyperspheres $S$ and $S'$ can be defined as follows. Let $p^\flat$ be the midpoint of $\gamma$, so that $R$ takes $p^\flat$ to the point $(1, 0, \ldots, 0)$.  Next, let $K : \Sph^{n+1} \setminus \{ (-1, 0, \ldots, 0) \} \rightarrow \R^{n+1}$ denote the stereographic projection centered at $(1, 0, \ldots, 0)$ defined by
$$ K(x^0, x^1, \ldots, x^{n+1}) := \left( \frac{x^1}{1+x^0} , \cdots, \frac{x^{n+1}}{1+x^0} \right)\, .
$$
Then define the desired coordinates by transplanting this stereographic projection to $p^\flat$ by composing with $R$.  That is, the desired coordinate mapping is the inverse of $K \circ R: \Sph^{n+1} \setminus \{ -p^\flat \} \rightarrow \R^{n+1}$.   Give the target $\R^{n+1}$ the coordinates $y^1, \ldots, y^{n+1}$ and set $\hat y := (y^2, \ldots, y^{n+1})$.  

Recall that stereographic projection sends non-equatorial $k$-spheres in $\Sph^{n+1}$ to $k$-spheres in $\R^{n+1}$ and sends equatorial $k$-spheres to linear subspaces.  One thus expects that the coordinate images of $S$ and $S'$ are two hyperspheres symmetrically located on either side of the origin and centered at points on the  $y^1$-axis.  Indeed, one can check that any point $ (\cos(\alpha), \sin (\alpha) \cos(\mu), \sin(\alpha) \sin(\mu) \Theta) \in S_\alpha$ that is rotated along $\gamma$ by an angle of $\tau/2$ and then rotated into $S^{sk}_\alpha [\vec \sigma_{sk}]$ by $R^{-1}$ maps to the point $(y^1, \hat y)$ in $\R^{n+1}$ given by
\begin{equation}
	\mylabel{eqn:stereo}
	\begin{aligned}
		y^1 & = \frac{\pm \big( \sin(\alpha + \tau/2) \cos(\alpha ) - \cos(\alpha + \tau/2) \sin(\alpha ) \cos(\mu) \big) }{1 + \cos(\alpha + \tau/2) \cos(\alpha) + \sin(\alpha + \tau/2) \sin(\alpha) \cos(\mu)}\\[1ex]
		\hat y &=  \frac{\sin(\alpha ) \sin(\mu) \Theta}{1 + \cos(\alpha + \tau/2) \cos(\alpha) + \sin(\alpha + \tau/2) \sin(\alpha ) \cos(\mu)} 
	\end{aligned}
\end{equation}
from which one can check that $(y^1, \hat y )$  lies on the locus of points satisfying the equation
\begin{equation}
	\mylabel{eqn:sphimage}
	\left( y^1 \pm d \right)^2 + \Vert \hat y \Vert^2 =  r^2
\end{equation}
where 
\begin{equation}
	\mylabel{eqn:raddisp}
	\begin{aligned}
		r &= r(\alpha, \tau) := \frac{\sin(\alpha)}{ \cos(\alpha) + \cos(\alpha + \tau/2)} \\
		d &= d(\alpha, \tau) := \frac{ \sin(\alpha + \tau/2)}{\cos(\alpha) + \cos(\alpha + \tau /2)} \, .
	\end{aligned}
\end{equation} 

An additional by-product of the stereographic coordinates defined above is that these bring the metric into geodesic normal form: that is, the metric is Euclidean and its derivatives vanish at the centre of the coordinates.  This can be seen by the computation of the metric $$\big( R^{-1} \circ K^{-1} \big)^\ast g_{\Sph^{n+1}} = A^{-2} g_0$$ where $g_0$ is the Euclidean metric of $\R^{n+1}$ and $A(y) = \frac{1}{2} (1 + \sum_{k=1}^{n+1} (y^k)^2)$.   The geodesic normal form will have the effect of distorting as little as possible the geometry of objects embedded into the sphere using the stereographic coordinate map, provided one remains near the origin.

\subsection{Gluing a Pair of Perturbed Hyperspheres Together}
\label{sec:glue}

\paragraph*{Asymptotic expansions.}  Because of the asymptotic expansion \eqref{eqn:distribeqnasym}, the perturbed hyperspheres $\tilde S^{sk}_\alpha [\vec a_{sk}, \vec \sigma_{sk}]$ resemble the ends of catenoids near their boundaries.  By reparametrizing the images of the perturbed hyperspheres under stereographic projection as graphs over the $\hat y$-hyperplane in a small neighbourhood of the $y^1$-axis, one obtains a more precise description of this resemblance. 

Let $S$ and $S'$ be the pair of neighbouring hyperspheres studied above and suppose $R \in SO(n+2)$ carries them into the standard position with respect to the $\{x^0, x^1\}$-equator.  Suppose that $\tilde S$ and $\tilde S'$ are the corresponding perturbed hyperspheres generated by the functions $G$ and $G' $.  Set $a := a_1$ and $a' := a_1'$ in the definition of $G$ and $G'$.  Furthermore, suppose that $p_1$ and $p_1'$ are the points of closest approach between $S$ and $S'$, and that these are separated by a distance $\tau$.

From the formul\ae\ for normal graphs over hyperspheres and the properties of the stereographic projection, one finds that the coordinates $y(\mu, \Theta) \in \R^{n+1}$ of a point on the image of the perturbed hypersphere $K \circ R(\tilde S)$ near the point $K \circ R(p_1)$ satisfy 
\begin{equation}
	\mylabel{eqn:vgraph}
	y^1(\mu) = -d_G(\mu) + \sqrt{[r_G(\mu)]^2 - \| \hat y \|^2}
\end{equation}
where
\begin{align*}
	r_G(\mu) &:=  \frac{\sin(\alpha+  G (\mu))}{ \cos(\alpha+ G (\mu)) + \cos(\alpha + \tau/2)} \\[0.5ex]
	d_G(\mu) &:=  \frac{ \sin(\alpha + \tau/2)}{\cos(\alpha+ G (\mu)) + \cos(\alpha + \tau /2)} \, .
\end{align*}
Furthermore, the relation between $\mu$ and $\| \hat y \|$ is given by 
\begin{equation}
	\mylabel{eqn:invrel}
	\| \hat y \| = \frac{\sin(\alpha+  G (\mu) ) \sin(\mu)}{1 + \cos(\alpha + \tau/2) \cos(\alpha+ G (\mu)) + \sin(\alpha + \tau/2) \sin(\alpha+ G (\mu) ) \cos(\mu)} \, .
\end{equation}
By computing the derivative $\frac{\dif}{\dif \mu} \| \hat y \|$, one finds that the relation \eqref{eqn:invrel} is invertible in the region where both $G(\mu)$ and $\mu$ are small, and moreover that $\mu(\| \hat y \|) = 2 \csc(\alpha) \cos^2(\tau/4) \| \hat y \| + \mathcal O(\| \hat y \|^3)$.  Substituting this into \eqref{eqn:vgraph} yields $y^1(\| \hat y \|) = \mathcal G(\| \hat y \|)$ where $\mathcal G(\| \hat y \|) := d_G( \mu(\| \hat y \|)) - \sqrt{[r_G(\mu(\| \hat y \|))]^2 - \| \hat y \|^2}$.  One finds also the asymptotic expansion 
\begin{equation}
	\mylabel{eqn:greensurfasym}
	\mathcal G(\| \hat y \|) =
	\begin{cases}
		\begin{aligned}
			&\hspace{-0.875ex} - \tan(\tau/4) - \frac{\| \hat y \|^2}{2r} + a \big( c_2 -  C_2 \log ( \| \hat y \|) \big) \\[-0.25ex]
			&\hspace{-0.875ex} \qquad \qquad + \mathcal O (\| \hat y \|^4) + \mathcal O(|a| \| \hat y \|^2 | \log(\| \hat y \|)| ) 
		\end{aligned} 
		&\quad \mbox{$n=2$}  \\[4ex]
		\displaystyle -\tan(\tau/4)  - \frac{\| \hat y \|^2}{2r}  + \frac{a C_3}{ \| \hat y \|} + \mathcal O (\| \hat y \|^4) + \mathcal O \left( |a| \| \hat y \| \right) &\quad \mbox{$n=3$}\\[1.5ex]
		\displaystyle - \tan(\tau/4)  - \frac{\| \hat y \|^2}{2r}   + \frac{a C_4}{ 2  \| \hat y \|^{2}} + \mathcal O (\| \hat y \|^4)  +\mathcal O \left( |a| \bigl[ 1 + | \log(\| \hat y \|) | \bigr]  \right) &\quad \mbox{$n=4$} \\[1.5ex]
		\displaystyle -\tan(\tau/4)  - \frac{\| \hat y \|^2}{2r}  + \frac{a C_n}{(n-2) \| \hat y \|^{n-2}} + \mathcal O (\| \hat y \|^4) + \mathcal O \left( \frac{|a|}{\| \hat y \|^{n-4}} \right) &\quad \mbox{$n\geq 5$}
	\end{cases}
\end{equation}
in the region where both $\| \hat y \|$ and $G(\mu(\| \hat y \|))$ remain small.  Here $d = d(\alpha, \tau)$ and $r = r(\alpha, \tau)$ are the quantities in \eqref{eqn:raddisp}, while 
\begin{equation}
	\label{eqn:expansionconst}
	\begin{aligned}
		c_2 &=  \frac{\cos(2 \alpha + \tau/2) \bigl(c_0  + \log \bigl( \csc(\alpha) \cos^2(\tau/4) \bigr) \bigr)}{\bigl( \cos(\alpha) + \cos(\alpha + \tau/2) \bigr)^2} \\[0.5ex]
		C_n &= \frac{\cos(2 \alpha + \tau/2)}{\bigl( \cos(\alpha) + \cos(\alpha + \tau/2) \bigr)^2 \bigl( 2 \csc(\alpha) \cos^2(\tau/4) \bigr)^{n-2}}
	\, .
	\end{aligned}
\end{equation}
In a similar manner, one finds that the equation satisfied by points on the image of the perturbed hypersphere $K \circ R(\tilde S')$ near the point $K \circ R(p_1')$ satisfy  $y^1(\| \hat y \|) = - \mathcal G'(\| \hat y \|)$ where $\mathcal G'(\| \hat y \|)$ is formally the same function as $\mathcal G(\| \hat y \|)$ except with $a$ replaced by $a'$.  

\paragraph*{Finding a matching catenoidal neck.} The catenoid in  $\R \times \R^n$ scaled by a factor $\eps>0$ is the hypersurface $\eps \Sigma $ parametrized by
	\begin{equation}
		\label{eqn:gencat}
		( s , \Theta) \in \RÊ\times \Sph^{n-1} \longmapsto (\eps  \psi (s), \eps \phi(s) \,  \Theta )
	\end{equation}
	where $\phi(s) :=  (\cosh (n-1) s)^{1/(n-1)}$ and $\psi (s) : = \int_{0}^s \, \phi^{2-n} (\sigma) \, \dif \sigma$, while $\Theta$ is parametrizes the unit $(n-1)$-sphere.  An alternate parametrization for the catenoid is needed here, namely when $\eps \Sigma$ is written as the union of two graphs over the $\R^n$ factor.  That is, by inverting the equation $\| \hat y \| = \eps \phi(s)$, one finds that  $\eps \Sigma = \Sigma_{\eps}^+ \cup \Sigma_{\eps}^-$ where $\Sigma_{\eps}^\pm := \big\{ ( F_\eps (\| \hat y \|), \hat y ) : \Vert \hat y  \Vert \geq \eps \big\}$.  The function $F_\eps : \{ x \in \R : x \geq \eps \} \rightarrow \R$ is defined by $F_\eps (x) =  \eps F(x /\eps)$ where 
\begin{equation}
	\mylabel{eqn:catenoidgrfn}
	F(x) := \int_1^x (\sigma^{2n-2} - 1)^{-1/2} \dif \sigma \, .
\end{equation}
Note that in dimension $n=2$ this function is simply $F(x) = \arccosh(x)$.  Therefore one has the asymptotic expansion
\begin{equation}
	\mylabel{eqn:catenoidasym}
	\eps F( \| \hat y \| / \eps) = 
	\begin{cases}\displaystyle
		\eps  \log (2 / \eps)  + \eps \log ( \| \hat y \| ) - \frac{\eps^3}{4 \| \hat y \|^2 } + \mathcal O \left( \frac{ \eps^5}{\| \hat y \|^4} \right) &\quad \mbox{$n=2$} \\[1.5ex]
		\displaystyle \eps c_n - \frac{\eps^{n-1}}{(n-2) \| \hat y \|^{n-2}} - \frac{\eps^{3n-3}}{2(3n-4) \| \hat y \|^{3n-4}} + \mathcal O \left( \frac{\eps^{5n-5}}{\| \hat y \|^{5n-6}} \right) &\quad \mbox{$n \geq 3$}
	\end{cases}
\end{equation}
where $c_n$ is yet another constant.  

In order to find the catenoid which matches optimally with the coordinate images $K \circ R(\tilde S)$ and $K \circ R(\tilde S')$, one must compare the asymptotic expansions of type \eqref{eqn:greensurfasym} valid for $K \circ R(\tilde S)$ and $K \circ R(\tilde S')$ with the asymptotic expansion \eqref{eqn:catenoidasym} at either end of the catenoid.  These asymptotic expansions match if the following conditions hold:
\begin{subequations}
	\label{eqn:match}
	\begin{equation}
		a C_n = \eps^{n-1} = a' C_n
	\end{equation}
	as well as
		\begin{equation}
		\begin{aligned}
			\left. 
			\begin{aligned}
				-\tan(\tau/4) +  a c_2 &=  - \eps \log(2/\eps) \\
				\tan(\tau/4) - a c_2 &=  \eps \log(2/\eps) 
			\end{aligned}
			\qquad \right\} &\quad n=2 \\
			\left.
			\begin{aligned}
				-\tan(\tau/4) &= - \eps c_n \\
				\tan(\tau/4)  &=   \eps c_n 
			\end{aligned}
			\hspace{1.825cm} \right\} &\quad n\geq 3 \, .
		\end{aligned}
	\end{equation}
\end{subequations}
It is clear that these equations determine $a, a'$ and $\eps$ completely in terms of the separation $\tau$ between the hyperspheres.   In fact $\eps = c_n^{-1} \tan (\tau/4)$ in dimension $n\geq 3$ and $\eps$ satisfies $\tan(\tau/4) = c_2 C_2^{-1} \eps  + \eps \log(2/\eps)$ in dimension $n=2$.

\paragraph*{The gluing.}  The considerations above determine the parameter values for the two perturbed hyperspheres and the re-scaled catenoid needed for optimal matching.  The height of the matching catenoid can also be determined by these considerations.  That is, once $a, a', \eps$ have been found in terms of $\tau$, then the errors $| \eps F(\| \hat y \|/\eps) - \mathcal G (\| \hat y \|)|$ and $| - \eps F(\| \hat y \|/\eps) + \mathcal G' (\| \hat y \|)|$ are smallest when $\hat y$ is chosen to lie in a range where the quantity $\frac{1}{2r} \| \hat y \|^2 + \frac{\eps^{3n-3}}{2(3n-4)} \| \hat y \|^{4-3n}$ is minimized.  This occurs when $\| \hat y \| = \mathcal O(r_\eps)$ where $r_\eps := \eps^{(3n-3)/(3n-2)}$.  Thus one must truncate the re-scaled catenoid exactly at $\| \hat y \| = r_\eps$ for an optimally smooth gluing.

The gluing itself can now be accomplished as follows.  Let $\eta : [0, \infty) \rightarrow \R$ be a smooth, monotone cut-off function satisfying $\eta(s) = 0$ for $s \in [0, 1/2]$ and $\eta(s) = 1$ for $s \in [2, \infty)$.  Define the functions $\tilde F_{\alpha, \tau}^\pm : \bar B_{2 r_\eps}(0) \setminus B_{r_\eps} (0) \subseteq  \R^{n} \rightarrow \R$ by 
\begin{equation}
	\mylabel{eqn:mergedfn}
	\begin{aligned}
		\tilde F^+_{\alpha, \tau} (\hat y) &= \eps \big( 1 - \eta(\Vert \hat y\Vert / r_\eps) \big)  F(\Vert \hat y\Vert / \eps)  + \eta(\Vert \hat y\Vert /r_\eps) \mathcal G (\| \hat y \| ) \\
		\tilde F^-_{\alpha, \tau} (\hat y) &= - \eps \big( 1 - \eta(\Vert \hat y\Vert / r_\eps) \big)  F(\Vert \hat y\Vert / \eps)  - \eta(\Vert \hat y\Vert /r_\eps) \mathcal G' (\| \hat y \| ) \, .
	\end{aligned}
\end{equation}
Define the hypersurfaces $\tilde \Sigma^\pm_\eps = \{ ( \pm \tilde F^\pm_{a, \tau}(\hat y), \hat y) : \Vert \hat y \Vert \in [\eps, 2 r_\eps] \}$.  Therefore $\tilde \Sigma_\eps := \tilde \Sigma_\eps^+ \cup \tilde \Sigma_\eps^-$ is a smooth hypersurface connecting $K \circ R( \tilde S) \setminus \big[  \R \times B_{2 r_\eps}(0) \big]$ to  $K \circ R( \tilde S') \setminus \big[ \R \times B_{2 r_\eps}(0) \big]$ through a truncated catenoid.

\subsection{Assembling the Approximate Solution}

All neighbouring hyperspheres in the initial configuration $\Lambda^{\#}[\alpha, \Gamma, \tau, \vec \sigma]$ can be perturbed appropriately and glued together repeating the process described in the previous three sections for every hypersphere.  That is, the separation between any two hyperspheres in $\Lambda^{\#}[\alpha, \Gamma, \tau, \vec \sigma]$ determines the parameters of the catenoidal neck that fits between them via the equations \eqref{eqn:match}.  The neck scale parameters of all the necks then determine the asymptotic parameters of the perturbed hyperspheres.  Finally, each perturbed hypersphere is attached to its neighbouring necks using the method of fusing the graphing functions of the neck with the graphing functions of the perturbed hyperspheres as in equation \eqref{eqn:mergedfn}.

\begin{defn}
\mylabel{defn:approxsol}
The \emph{approximate solution} with parameters $\tau$, $\vec \sigma$ is the hypersurface $\approxsol$ obtained from the process described above.
\end{defn}  

Note that by choosing the functions $G_{sk}$ invariant under all $\rho \in G_\Gamma$ preserving $S_{\alpha}^{sk}[\vec \sigma_{sk}]$ and equal to $G_{s'k'}$ whenever there is $\rho \in G_\Gamma$ taking $S_{\alpha}^{sk}[\vec \sigma_{sk}]$ to $S_{\alpha}^{s'k'}[\vec \sigma_{s'k'}]$, then $\approxsol$ becomes invariant under $G_{\Gamma}$ as well.  Finally,  the hypersurface $\approxsol$ can be subdivided into regions of three distinct types.
\begin{defn}
	Identify the following regions of $\approxsol$.
	\begin{itemize}
		\item Let $\mathcal N^{sk}$ be the \emph{neck region} between the $k^{\mathit{th}}$ and $(k+1)^{\mathit{st}}$ perturbed hypersphere along the geodesic $\gamma_s$.  Note that $\mathcal N^{sk}$ carries a scale parameter $\eps_{sk}$ depending smoothly on $\tau_s$ and $\vec \sigma_{sk}$ and $\vec \sigma_{s, k+1}$.   In the canonical stereographic coordinates $\mathcal N^{sk}$ is the set of points $(y^1, \hat y)$ corresponding to $\| \hat y \| \leq r_{\eps_{sk}}$.
		
		\item Let $\mathcal T^{sk, \pm}$ be the \emph{transition regions} associated to the neck $\mathcal N^{sk}$.   In the canonical stereographic coordinates used to define this neck, $\mathcal T^{sk, +}$ is the set of points $(y^1, \hat y)$ corresponding to $r_{\eps_{sk}} < \| \hat y \| \leq2 r_{\eps_{sk}}$ and $y^1 > 0$ whereas $\mathcal T^{sk, -}$ is the set of points $(y^1, \hat y)$ corresponding to $r_{\eps_{sk}} < \| \hat y \| \leq2 r_{\eps_{sk}}$ and $y^1 < 0$.

		\item Let $\mathcal E^{sk}$ be the \emph{spherical region} corresponding to the $k^{\mathit{th}}$ neck along the geodesic $\gamma_s$.  This is the set of points in $\tilde S^{sk}_\alpha [ \vec a_{sk}, \vec \sigma_{sk}] \setminus \bigcup_{j=1}^K B_{r} (p_j)$ where $p_1, \ldots, p_K$ are the points of $S^{sk}_\alpha [\vec \sigma_{sk}]$ closest to its neighbouring hyperspheres and $r$ is a small radius chosen to exclude exactly the neck and transition region connecting $\tilde S^{sk}_\alpha [ \vec a_{sk}, \vec \sigma_{sk}]$ to its neighbour near $p_j$.
	\end{itemize}
\end{defn}

\section{The Exact Solution up to Finite-Dimensional Error}
 
\subsection{The Analytic Set-Up}

\paragraph*{Deforming the approximate solution.} The approximate solution $\approxsol$ has mean curvature almost equal to $H_\alpha$ everywhere except in the neck and transition regions where the mean curvature becomes zero.  The task ahead is to develop a means for deforming $\approxsol$ as well as an equation that selects the deformation of $\approxsol$ into an exactly constant mean curvature hypersurface with mean curvature equal to $H_\alpha$.  

Since $\approxsol$ is a hypersurface, it is possible to parametrize deformations of $\approxsol$ in a very standard way via normal deformations.  These can be constructed by choosing a function $f : \approxsol \rightarrow \R$ and then considering the deformation $\phi_f : \approxsol \rightarrow \Sph^{n+1}$ given by $\phi_f (x) := \exp_x (f(x) \cdot N(x))$ where $\exp_x$ is the exponential map at the point $x$ and $N(x)$ is the outward unit normal vector field of $\approxsol$ at the point $x$.  For any given function $f$, the hypersurface $\phi_f(\approxsol)$ is a normal graph over $\approxsol$, provided $f$ is sufficiently small in a $C^1$ sense.  Finding an exactly CMC normal graph near $\approxsol$ therefore consists of finding a function $f$ satisfying the equation $H_{\phi_f(\approxsol)} = H_\alpha$, where $H_\Lambda$ denotes the mean curvature of a hypersurface $\Lambda$.  

\begin{defn} 
Let  $\difop$ be the operator $f \longmapsto H_{\phi_f(\approxsol)} - H_\alpha $.  
\end{defn}

\noindent This is a quasi-linear, second-order partial differential operator for the function $f$ whose zero gives the desired deformation of $\approxsol$.  

\paragraph*{The strategy of the proof.} Finding a solution of the equation $\difop(f) = 0$ when $\tau$  and $\vec \sigma$ are sufficiently small will be accomplished by invoking the \emph{Banach space inverse function theorem} in a particular way.   To provide a focus for the remainder of the proof of Main Theorem \ref{result1}, this fundamental result will be stated here in fairly general terms \cite{amr}.

\begin{nonumthm}[IFT]
	Let $\Phi : X \rightarrow Z$ be a smooth map of Banach spaces, set $\Phi(0) := E$ and define the linearized operator $\mathcal L := \Dif \Phi(f) =  \left. \frac{\dif}{\dif s} \Phi(f+ s u)  \right|_{s=0}$.  Suppose $\mathcal L $ is bounded and surjective, possessing a bounded right inverse $\mathcal R : Z \rightarrow X$ satisfying
	\begin{equation}
		\mylabel{eqn:iftestone}
		\Vert \mathcal R (z)\Vert \leq C \Vert z \Vert
	\end{equation}
	for all $z \in Z$.	 Choose $R$ so that if $y \in B_R(0) \subseteq X$, then 
	\begin{equation}
		\mylabel{eqn:iftesttwo}
		\Vert \mathcal L( x) - \Dif \Phi(y) (x) \Vert \leq \frac{1}{2C}  \Vert x \Vert
	\end{equation}
for all $x \in X$, where $C>0$ is a constant.  Then if $z \in Z$ is such that 
	\begin{equation}
		\mylabel{eqn:iftestthree}
		\Vert z - E \Vert \leq \frac{R}{2C} \, ,
	\end{equation}
there exists a unique $x \in B_R(0)$ so that $\Phi(x) = z$.  Moreover, $\Vert x \Vert \leq 2 C \Vert z - E \Vert$. 
\end{nonumthm}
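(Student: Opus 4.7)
The plan is to recast $\Phi(x) = z$ as a fixed-point problem on the closed ball $\overline{B_R(0)} \subseteq X$ and invoke the Banach contraction mapping theorem. Decompose
$$\Phi(x) = E + \mathcal L(x) + Q(x), \qquad Q(x) := \Phi(x) - E - \mathcal L(x),$$
so that $\Phi(x) = z$ is equivalent to $\mathcal L(x) = (z-E) - Q(x)$. Applying the right inverse $\mathcal R$ to both sides suggests the map
$$T(x) := \mathcal R\bigl( (z - E) - Q(x) \bigr).$$
Any fixed point $x = T(x)$ satisfies $\mathcal L(x) = \mathcal L \mathcal R\bigl((z-E) - Q(x)\bigr) = (z-E) - Q(x)$, i.e.\ $\Phi(x) = z$. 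Because $\mathcal R$ is only a right inverse of $\mathcal L$, the map $T$ must be defined this way rather than by the more naive Newton-type iteration $x - \mathcal R(\Phi(x) - z)$, which is not evidently contractive when $\mathcal R \mathcal L \neq I$.

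The main step, and the only one that requires any work, is the Lipschitz estimate for $T$. For $x, y \in \overline{B_R(0)}$, the fundamental theorem of calculus applied to the smooth map $\Phi$ gives
$$Q(x) - Q(y) = \int_0^1 \bigl[ \Dif\Phi(y + s(x-y)) - \mathcal L \bigr](x - y) \, \dif s,$$
and the straight segment from $y$ to $x$ stays in $\overline{B_R(0)}$. Hypothesis \eqref{eqn:iftesttwo} therefore yields $\Vert Q(x) - Q(y) \Vert \leq \tfrac{1}{2C}\Vert x - y \Vert$, which combined with the operator bound \eqref{eqn:iftestone} on $\mathcal R$ produces
$$\Vert T(x) - T(y) \Vert \leq C \cdot \tfrac{1}{2C} \Vert x - y \Vert = \tfrac{1}{2} \Vert x - y \Vert.$$
So $T$ is a contraction with constant $1/2$ on $\overline{B_R(0)}$. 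This estimate is the heart of the argument, as it is precisely here that the quantitative balance between the norm of the right inverse and the near-linearity of $\Phi$ on $B_R(0)$ is exploited.

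The remaining verifications are short. Since $Q(0) = 0$, one has $T(0) = \mathcal R(z - E)$, and hypotheses \eqref{eqn:iftestone} and \eqref{eqn:iftestthree} give $\Vert T(0) \Vert \leq C \Vert z - E \Vert \leq R/2$. Then for $x \in \overline{B_R(0)}$,
$$\Vert T(x) \Vert \leq \Vert T(x) - T(0) \Vert + \Vert T(0) \Vert \leq \tfrac{1}{2} \Vert x \Vert + R/2 \leq R,$$
so $T$ sends $\overline{B_R(0)}$ into itself. The Banach fixed point theorem then produces a unique fixed point $x \in \overline{B_R(0)}$, which by the discussion above solves $\Phi(x) = z$. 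The concluding norm bound $\Vert x \Vert \leq 2 C \Vert z - E \Vert$ follows by applying the triangle inequality in the form $\Vert x \Vert = \Vert T(x) \Vert \leq \tfrac{1}{2} \Vert x \Vert + C \Vert z - E \Vert$ and rearranging. No step of this proof is deep; the only subtlety worth flagging is the careful construction of $T$ so that the surjectivity (as opposed to bijectivity) of $\mathcal L$ suffices.
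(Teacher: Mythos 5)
Your argument is the standard contraction-mapping proof of this quantitative inverse function theorem; the paper itself offers no proof and simply cites the literature, so there is no internal argument to compare against. Your existence part is sound: the decomposition $\Phi(x) = E + \mathcal L(x) + Q(x)$ with $\mathcal L = \Dif\Phi(0)$ (the paper's ``$\Dif\Phi(f)$'' is a typo --- your reading is the only one under which $Q(0)=0$ and the decomposition makes sense), the fixed-point map $T(x) = \mathcal R\bigl((z-E) - Q(x)\bigr)$, the Lipschitz bound via the fundamental theorem of calculus and \eqref{eqn:iftesttwo}, the self-mapping of $\overline{B_R(0)}$ using \eqref{eqn:iftestone} and \eqref{eqn:iftestthree}, and the final estimate $\Vert x \Vert \leq 2C\Vert z - E\Vert$ are all correct. (Two trivial points: \eqref{eqn:iftesttwo} is hypothesized on the open ball while your segment may touch the boundary of the closed ball, which is repaired by continuity of $\Dif\Phi$ or by working on a marginally smaller radius; and you should note explicitly that $\mathcal R$ is linear, which is what makes $T(x)-T(y) = \mathcal R(Q(y)-Q(x))$.)

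The one genuine gap is the uniqueness claim. The contraction mapping theorem gives you a unique \emph{fixed point of $T$} in $\overline{B_R(0)}$, but a solution of $\Phi(x) = z$ need not be a fixed point of $T$: if $\Phi(x)=z$ then $T(x) = \mathcal R\mathcal L(x)$, and since $\mathcal R$ is only a right inverse this equals $x$ only when $x$ lies in the range of $\mathcal R$. So your argument establishes uniqueness among solutions of the form $x = \mathcal R(w)$, not among all solutions in $B_R(0)$. In fact the uniqueness assertion as literally stated is false under the stated hypotheses: take $\Phi = \mathcal L$ linear and surjective with nontrivial kernel; then \eqref{eqn:iftesttwo} holds vacuously for every $R$, yet $\Phi(x) = E$ has the whole of $\ker\mathcal L \cap B_R(0)$ as its solution set. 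The honest conclusion of your proof is existence, the norm bound, and uniqueness within $\mathcal R(Z) \cap \overline{B_R(0)}$ --- which is all that is ever used downstream, since Proposition \ref{prop:secperturb} only requires the existence of $f_{\tau,\vec\sigma}$ with the stated estimate. You should either restrict the uniqueness claim accordingly or add the hypothesis that $\mathcal L$ is injective.
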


As the statement of theorem makes clear, it must be the case that $\linop$ is surjective with a bounded right inverse in order to solve the equation $\difop(f) = 0$ in a Banach subspace $X$ of at least $C^2$ functions on $\approxsol$.   It is however a general phenomenon in singular perturbation problems that the linearized operator often has a finite number of small eigenvalues tending to zero as the singular parameter (in this case $\tau$) tends to zero, making it impossible to achieve the bound \eqref{eqn:iftestone} with a constant independent of $\tau$.  The eigenfunctions associated to these degenerating eigenvalues are the \emph{obstructions} preventing the deformation to an exactly CMC hypersurface and are called \emph{Jacobi fields}.

One way out of this difficulty is to project $\linop$ onto a subspace of functions which is transverse to the co-kernel associated to the Jacobi fields, at least in a good enough approximate sense, and to construct a bounded right inverse for the projected linear operator.  Since  $\linop$ is self-adjoint, an appropriate subspace to choose is the $L^2$-orthogonal complement of the span of a collection of functions that closely approximates the Jacobi fields.  Let $\pi$ denote this $L^2$ projection (to be defined more precisely below) and set $\linop^\perp := \pi \circ \linop$.  The projected non-linear operator $\difop^\perp := \pi \circ \difop$, whose linearization is $\linop^\perp := \pi \circ \linop$, would then satisfy the requirements of the Banach space inverse function theorem and a solution of the equation $\difop^\perp (f) = 0$ could be found. This is the solution of the CMC deformation problem \emph{up to a finite-dimensional error term} lying in the span of the approximate Jacobi fields.  

The present section of this paper implements the above idea.  The construction of the right inverse satisfying the first of the estimates \ref{eqn:iftestone} needed to invoke the Banach space inverse function theorem (hereinafter called the \emph{linear estimate}) is carried out in Section \ref{sec:linest} after some preliminary work in Sections \ref{sec:jacobi} and \ref{sec:funcspace} that helps identify the correct Banach subspace $X$ and $L^2$ projection $\pi$.  The remaining two estimates \ref{eqn:iftesttwo} and \ref{eqn:iftestthree} required to invoke this theorem (hereinafter called the \emph{non-linear estimates}) are proved in Section \ref{subsec:nonlinest}.    Of course, it remains to show that it is possible to solve $\difop(f) = 0$ exactly, and this will be explained fully in Section \ref{sec:balancing}.

\subsection{Function Spaces and Norms}
\mylabel{sec:funcspace}

The equation $\pi \circ \difop (f) = 0$ will be solved in a Banach subspace of $C^{2,\beta}(\approxsol)$ where the norm is designed to properly determine the dependence on the parameter $\tau$ of the various estimates needed for the application of the inverse function theorem.  The norm in question is a so-called \emph{weighted Schauder norm}.  To define this norm, one must first define a \emph{weight function} on $\approxsol$.   Let $\mathcal P := \{ p_{sk}^\flat : k=0, \ldots, N_s - 1 \mbox{ and }  s = 1, \ldots, L \}$ be the set of all points of $\Sph^{n+1}$ upon which the necks of $\approxsol$ are centered.  Let $K_{sk}$ denote the canonical stereographic projection used to define the neck $\mathcal N^{sk}$.  Fix some $r_0$ independent of $\tau$ such that the balls of radii $2 r_0$ centered on any two points of $\mathcal P$ do not intersect.

\begin{defn}  
	\mylabel{defn:weight}
	The \emph{weight function} $\zeta_{\tau} : \approxsol \rightarrow \R$ is defined by
\begin{equation*}
	\zeta_\tau (x) = 
	\begin{cases}
		\eps_{sk} \cosh(s) &\quad x = K_{sk}^{-1} ( \eps_{sk} \psi(s), \eps_{sk} \phi(s) \Theta) \in \mathcal N^{sk} \\
		\mathit{Interpolation} &\quad x \in \mathcal T^{sk} \\
		\sqrt{ \eps_{sk}^2 + \mathrm{dist} (x, p_{sk}^\flat)^2} &\quad x \in \approxsol \cap \big[ \bar B_{r_0}(p_{sk}^\flat) \setminus \mathcal T^{sk,-} \cap \mathcal N^{sk} \cup \mathcal T^{sk,+} \big]\\[1ex]
		\mathit{Interpolation} &\quad x \in \approxsol \cap \big[ \bar B_{2 r_0}(p_{sk}^\flat) \setminus B_{r_0}(p_{sk}^\flat) \big]\\[1ex]
		2 r_0 &\quad x \in \approxsol \setminus \displaystyle \bigcup_{\mathcal P} B_{2 r_0}(p_{sk}^\flat) \, .
	\end{cases}
\end{equation*}	
\end{defn}

\noindent The interpolation is such that $\zeta_\tau$ is smooth and monotone in the region of interpolation, and invariant under the group $G_\Gamma$.  

The weighted Schauder norm can now be defined.   Let $\mathcal E := \approxsol \setminus \bigcup_{\mathcal P} B_{2 r_0}(p_{sk}^\flat)$ and let $\mathcal A^{sk}_r := \approxsol \cap \big[ B_{2 r}(p_{sk}^\flat) \setminus \bar B_r (p_{sk}^\flat) \big]$ for each $p_{sk}^\flat \in \mathcal P$.  Introduce the following preliminary notation.  For any tensor field $T$  on $\approxsol$ and any open subset $\mathcal U \subseteq \approxsol$, define 
\begin{equation*}
	| T |_{0, \mathcal U} := \sup_{x \in \,  \mathcal U} \Vert T(x) \Vert \\
	\qquad \mbox{and} \qquad [T]_{\beta, \, \mathcal U} := \sup_{x,x' \in \, \mathcal U} \frac{\Vert T(x') - \Xi_{x,x'} (T(x)) \Vert}{\mathrm{dist}(x,x')^\beta} \, ,
\end{equation*}
where the norms and the distance function that appear are taken with respect to the induced metric of $\approxsol$, while $\Xi_{x,x'}$ is the parallel transport operator from $x$ to $x'$.  Furthermore, if $f: \mathcal U \rightarrow \R$ then define
$$|f|_{l, \beta, \, \mathcal U} := \sum_{i=0}^{l}  | \nabla^i f |_{0, \, \mathcal U} + [ \nabla^l f ]_{\beta, \,\mathcal U} \, .$$
Now make the following definition.

\begin{defn} 
	Let $\mathcal U \subseteq \approxsol$ and $\delta \in \R$ and $\beta \in (0,1)$. The $C^{l, \beta}_\delta$ norm of a function defined on $\mathcal U$ is given by
\begin{equation}
	\mylabel{eqn:weightnorm}
	| f |_{C^{l, \beta}_\delta (\mathcal U)} :=  |  f |_{l, \beta, \,\mathcal U \cap \mathcal E} 
	+ \sup_{\mathcal P} \sup_{r \in [0,  r_0 ] } \left\{ \left( \sup_{x \in \mathcal U \cap \mathcal A_r^{sk}} \big[ \zeta_\tau (x) \big]^{-\delta} \right)  | f |_{l, \beta, \delta, \, \mathcal U \cap \mathcal A_r^{sk} } \right\} \, . 
\end{equation}  
\end{defn}

\noindent The notation for the $C^{l,\beta}_\delta$ norm of a function defined on all of  $\approxsol$ will be abbreviated $| \cdot |_{C^{l, \beta}_\delta}$ but $C^{l,\beta}_\delta$ norms of functions defined on smaller subsets of $\approxsol$ will be written out in full.  Finally, the Banach space $\clbg (\approxsol)$ denotes the $C^{l,\beta}$ functions of  $\approxsol$ measured with respect to the norm \eqref{eqn:weightnorm}, while $C^{l, \beta}_{\delta, \mathit{sym}}(\approxsol)$ denotes those functions $f \in \clbg (\approxsol)$ satisfying $f \circ \rho = f$ for all $\rho \in G_\Gamma$.

It is well known that all the `usual' properties that one would expect from a Schauder norm (multiplicative properties, interpolation inequalities, and so on) are satisfied by the weighted $\clbg$ norms.   It is thus easy to deduce that 
$$\difop : \ctbg(\approxsol) \rightarrow \cobg(\approxsol)$$
is a well-defined and smooth operator (in the sense of Banach spaces) and that %
$$\linop : \ctbg(\approxsol) \rightarrow \cobg(\approxsol)$$
is bounded in the operator norm by a constant independent of $\tau$.  Furthermore $\difop$ and $\linop$ can by symmetrized to yield  new operators (which will be given the same names) on the symmetrized $\clbg$ spaces.

\subsection{Jacobi Fields}  

\mylabel{sec:jacobi}

The obstructions preventing the solvability of the CMC deformation problem have a geometric origin.   To see this, recall the general fact that any one-parameter family of isometries of the ambient space in which a CMC hypersurface is situated gives rise to an element in the kernel of the linearized mean curvature operator as follows.

\begin{lemma}
	\mylabel{lemma:linmeancurv}
	Let $\Lambda$ be a closed hypersurface in a Riemannian manifold $X$ with mean curvature $H_\Lambda$, second fundamental form $B_\Lambda$ and unit normal vector field $N_\Lambda$.   If $R_t$ is a one-parameter family of isometries of $X$ with deformation vector field $V = \left. \frac{\dif }{\dif t} R_t \right|_{t=0} $, then the function $q_V := \langle V, N_\Lambda \rangle$ belongs to the kernel of $\Lambda$.
\end{lemma}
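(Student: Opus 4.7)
The plan is to exploit the fact that isometries of the ambient manifold preserve mean curvature, and then differentiate this invariance at $t = 0$ using the first variation formula for the mean curvature operator.

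First I would argue that the family of hypersurfaces $\Lambda_t := R_t(\Lambda)$ satisfies $H_{\Lambda_t}\circ R_t = H_\Lambda$ pointwise on $\Lambda$, since $R_t$ is an isometry and mean curvature is an intrinsic-plus-shape invariant built entirely from the ambient metric and the second fundamental form. In particular, when $\Lambda$ has constant mean curvature the function $t \mapsto H_{\Lambda_t}$ is literally constant; but for the lemma as stated, one only needs that the $t$-derivative of the composition $H_{\Lambda_t} \circ R_t$ vanishes.

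Next I would split the deformation vector field along $\Lambda$ as $V|_\Lambda = V^{\top} + q_V\, N_\Lambda$. The tangential component $V^\top$ generates a one-parameter family of diffeomorphisms of $\Lambda$ which only reparametrizes the hypersurface and therefore contributes nothing to the variation of $\Lambda_t$ as a subset of $X$; all geometric motion of $\Lambda_t$ is encoded in the normal component. Consequently, near $t = 0$, $\Lambda_t$ can be written as the normal graph $\phi_{u_t}(\Lambda)$ for a function $u_t$ on $\Lambda$ with $u_0 = 0$ and $\dot u_0 = q_V$, up to tangential reparametrization that does not affect the value of $H_{\Lambda_t}$ at corresponding points.

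The key step is then to apply the standard first variation formula for the mean curvature under a normal graph deformation, namely
\begin{equation*}
\left.\frac{\dif}{\dif t}\right|_{t=0} H_{\phi_{u_t}(\Lambda)} \;=\; \mathcal{L}_\Lambda (\dot u_0)
\;=\; \Delta_\Lambda q_V + \bigl(\|B_\Lambda\|^2 + \mathrm{Ric}_X(N_\Lambda, N_\Lambda)\bigr) q_V,
\end{equation*}
where $\mathcal{L}_\Lambda$ is the Jacobi operator of $\Lambda$. Combining this with the invariance established in the first step yields $\mathcal{L}_\Lambda(q_V) = 0$, which is exactly the statement that $q_V$ lies in the kernel of the linearized mean curvature operator of $\Lambda$.

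The only real subtlety, and the place where I would be most careful, is the tangential-normal splitting: one must verify that passing from the full ambient flow generated by $V$ to a purely normal graph parametrization of $\Lambda_t$ does not contribute extra terms to the derivative of $H_{\Lambda_t}$. This reduces to observing that reparametrizing $\Lambda$ by diffeomorphisms generated by $V^\top$ leaves $H_\Lambda$ unchanged as a function on the moving hypersurface, so the derivative of $H_{\Lambda_t}$ at $t = 0$ depends only on the normal part $q_V$; once this is in hand the rest is the first variation formula quoted above.
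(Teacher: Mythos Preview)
Your proposal is correct and follows essentially the same line as the paper's proof: the paper simply observes that $H(R_t(\Lambda)) = H(\Lambda)$ for all $t$, differentiates at $t=0$, and then asserts that $q_V$ ``generates a normal deformation of $\Lambda$ whose action, to first order, coincides with $R_t$,'' which is exactly your tangential--normal splitting argument made explicit. Your version is more detailed (in particular spelling out the Jacobi operator and the reason the tangential part contributes nothing), but the underlying idea is identical.
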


\begin{proof}
	Since $R_t$ is a family of isometries, then $H(R_t(\Lambda) ) = H(\Lambda)$ for all $t$ and $\left. \frac{\dif}{\dif t} \right|_{t=0}H(R_t(\Lambda)) = 0$.  The function $q_V := \langle V, N_\Lambda\rangle$ is thus in the kernel of $\Dif H_\Lambda(0)$ because $q_V$ generates a normal deformation of $\Lambda$ whose action, to first order, coincides with $R_t$.
\end{proof}

The obstructions preventing the solvability of the equation $\difop(f) = 0$ on $\approxsol$ can be explained using Lemma \ref{lemma:linmeancurv}.  That is, one can imagine transformations of $\approxsol$ which rotate exactly one of its constituent hyperspheres or catenoidal necks by a rotation in $SO(n+2)$ while leaving all the other constituent hyperspheres and necks fixed. The associated approximate Jacobi field is of the form  $\chi q_V$ where $\chi$ is a cut-off function supported on one constituent of $\approxsol$ and $q_V$ is an exact Jacobi field for this constituent  as in the lemma.   It is known that the linear span of these functions approximates the small eigenspaces of $\linop$ well \cite[Appendix B]{kapouleas7}. 

An explicit representation of the Jacobi fields on the building blocks used to construct the approximate solution --- the hypersphere and the catenoid ---  will be now be given.  Then an explicit representation of the approximate Jacobi fields that will be used in the proof of Main Theorem 1 will be given at the beginning of the next section where the precise cut-off functions will be defined.  

\bigskip \noindent \itshape  1. Jacobi fields of the  hyperspheres. \upshape \smallskip
	
The linearized mean curvature operator of $S_\alpha$ is easily computed to be 
$$\mathcal L_a := \sin^{-2}(\alpha) \big(  \Delta_{\Sph^n} + n \big) \, .$$
Therefore, the Jacobi fields of $\mathcal L_a$ are simply the eigenfunctions of the $n$-sphere with eigenvalue $n$.  In the context of Lemma \ref{lemma:linmeancurv}, these can be derived by considering all non-trivial rotations of $S_\alpha$, namely the rotations generated by the vector fields
$$V_k := x^k \frac{\partial}{\partial x^0} - x^0 \frac{\partial}{\partial x^k} \: \: \mbox{for $k= 1, \ldots, n+1$} \, .$$  Taking the inner product of $V_k$ with $N_\alpha$ and restricting the resulting function to $S_\alpha$ the coordinate functions $x^k$ restricted to $S_\alpha$.

\bigskip \noindent \itshape 2. Jacobi fields of the catenoidal necks. \upshape \smallskip

The catenoidal necks of $\approxsol$ are catenoids $\Sigma$ in $\R^{n+1}$ that have been
re-scaled and embedded in $\Sph^n$ by the inverse of the canonical stereographic projection.  When the scale parameter is sufficiently small, it is enough to consider the Jacobi fields of $\Sigma$ treated as a hypersurface in $\R^{n+1}$, where the ambient metric is Euclidean rather than the with the metric induced by stereographic projection, and where the ambient isometries are the rigid motions of $\R^{n+1}$.  The linearized mean curvature operator of $\Sigma$ with respect to this background is easily computed to be
$$
\mathcal{L}_\Sigma  :=  \frac{1}{\phi^{n}} \, \frac{\partial}{\partial s} \left( \phi^{n-2}  \frac{\partial}{\partial s} \right) + \frac{1}{\phi^{2}}Ê\, \Delta_{\Sph^{n-1}} +  \frac{n(n-1)}{\phi^{2n}}
$$
in the standard parametrization of the catenoid. The isometries generating the relevant Jacobi fields of $\Sigma$ are as follows.  First, the ambient space $\R^{n+1} = \R \times \R^{n}$ possesses $n$ translations along the $\R^{n}$ factor and one translation in the $\R$ direction, which are generated by the vector fields
	$$V^{\text{trans}}_k := \frac{\partial}{\partial y^k} \:\: \mbox{ for $k=1,\ldots, n+1$}  \, .$$ 
	Then there are $n$ rotations of $\R \times \R^{n} $ that do not preserve the $\R$-direction, which are generated by the vector fields
	$$ V^{\text{rot}}_{1k} := y^{1} \frac{\partial}{\partial y^k} - y^k \frac{\partial}{\partial y^{1}} \:\: \mbox{ for $k = 2, \ldots, n+1$} \, .$$
	Finally, the motion of dilation in $\R^{n+1}$, though not an isometry, does preserve the mean curvature zero condition and is thus a geometric motion to which Lemma \ref{lemma:linmeancurv} can be applied.  Dilation is generated by the vector field
	$$V^{\text{dil}} := \sum_{k=1}^{n+1} y^k \frac{\partial}{\partial y^k} \, . $$  
	
	The Jacobi fields of $\mathcal{L}_\Sigma$ arising from the three classes of motions above can be found by the procedure of Lemma \ref{lemma:linmeancurv}.   One obtains the following non-trivial functions:
\begin{equation}
	\mylabel{eqn:catjacobi}
	\begin{aligned}
		J_1 (s) &:=  \langle N_\Sigma, V^{\text{trans}}_1 \rangle = \frac{\dot \phi(s)}{\phi(s)} \\
		J_k(s, \Theta) &:= \langle N_\Sigma, V^{\text{trans}}_k \rangle = - \frac{\Theta^k}{\phi^{n-1}(s)} \qquad k= 2, \ldots, n+1 \\
		J_{1k} (s, \Theta) &:= \langle  N_\Sigma, V^{\text{rot}}_{1k} \rangle =  \Theta^k \! \left( \frac{\psi(s)}{\phi^{n-1}(s)} + \dot \phi(s) \right) \qquad k = 2, \ldots, n+1 \\
		J_0(s) &:= \langle N_\Sigma, V^{\text{dil}} \rangle = \frac{\psi(s) \dot \phi(s)}{\phi(s)} - \frac{1}{\phi^{n-2}(s)} \, .
	\end{aligned}
\end{equation}
Note that the functions $J_k$ with $k \neq 0$ have odd symmetry with respect to the central sphere of $\Sigma$, i.e.~with respect to the transformation $s \mapsto -s$; while $J_{1k}$ and $J_0$ have even symmetry.  Also $J_1$ is bounded while $J_0$ has linear growth in dimension $n=2$ and is bounded in higher dimensions; $J_k$ decays like $\exp(-(n-1)|s|)$ for large $|s|$; and $J_{1k}$ grows like $\exp(|s|)$ for large $|s|$.

\subsection{The Linear Analysis}
\mylabel{sec:linest}

The most involved step in the application of the Banach space inverse function theorem  is the construction of the right inverse of the linear operator projected to a space orthogonal to the approximate co-kernel corresponding to the approximate Jacobi fields.  The purpose of this section of the paper is to explicitly define the projected linear operator and to find its right inverse on the appropriate Banach subspace of $\cobgs(\approxsol)$.   

The arguments that follow will require two carefully defined partitions of unity for the constituents of $\approxsol$.   First, for $s \in \{ 1, \ldots, L\}$ and $k \in \{ 0, \ldots, N_s - 1 \}$, define the smooth cut-off functions 
\begin{equation*}
	\eta^{sk}_{\mathit{neck}} (x) := 
	\begin{cases}
		1 &\qquad x \in \mathcal N^{sk} \\
		\mbox{Interpolation} &\qquad x \in \mathcal T^{sk} \\
		0 &\qquad \mbox{elsewhere}
	\end{cases}
\end{equation*}
and for $s \in \{ 1, \ldots, L\}$ and $k \in \{ 0, \ldots, N_s \}$, define the smooth cut-off functions
\begin{equation*}
	\eta^{sk}_{\mathit{ext}} (x) := 
	\begin{cases}
		1 &\qquad x \in \mathcal E_\eps^{sk} \\
		\mbox{Interpolation} &\qquad x \in \mbox{ any adjoining } \mathcal T^{s'k'}\\
		0 &\qquad \mbox{elsewhere}
	\end{cases}	
\end{equation*}
in such a way that $\sum_{s, k} \eta^{sk}_{\mathit{ext}} + \sum_{s, k} \eta^{sk}_{\mathit{neck}} = 1$.  In addition, one can assume that these cut-off functions are invariant under the group of symmetries $G_\Gamma$ and monotone in the interpolation regions.  Second, set $r_\tau := \max_{s, k} \{ r_{\eps_{sk}} \}$ and for $s \in \{ 1, \ldots, L\}$ and $k \in \{ 0, \ldots, N_s - 1 \}$ introduce the subsets $\mathcal N^{sk}(r) := \approxsol \cap B_{r} (p^\flat_{sk})$ where $r \in [  r_\tau, r_0]$.  This is a slightly enlarged version of the neck $\mathcal N^{sk}$ and its transition regions.  Define the smooth cut-off functions
\begin{equation*}
	\chi^{sk}_{\mathit{neck}, r} (x) := 
	\begin{cases}
		1 &\qquad x \in \mathcal N^{sk}(r) \\
		\mbox{Interpolation} &\qquad x \in  \mathcal N^{sk}(2r) \setminus \mathcal N^{sk}(r) \\
		0 &\qquad \mbox{elsewhere}
	\end{cases}
\end{equation*} 
and for $s \in \{ 1, \ldots, L\}$ and $k \in \{ 0, \ldots, N_s \}$, define the smooth cut-off functions
\begin{equation*}
	\chi^{sk}_{\mathit{ext}, r} (x) := 
	\begin{cases}
		1 &\qquad x \in \mathcal E^{sk} \setminus \displaystyle \left[ \, \bigcup_{\mbox{\scriptsize adjoining}}  \mathcal N^{s'k'}(2r) \right] \\[3ex]
		\mbox{Interpolation} &\qquad x \in \mbox{ any adjoining } \mathcal N^{s' k' }(2r) \setminus \mathcal N^{s' k' }(r) \\
		0 &\qquad \mbox{elsewhere}	\end{cases}	
\end{equation*}
so that once again $\sum_{s,k} \chi^{sk}_{\mathit{ext}, r} + \sum_{s,k} \chi^{sk}_{\mathit{neck}, r} = 1$ and invariance with respect to $G_\Gamma$ as well as the monotonicity in the interpolation regions hold.

\begin{rmk} The cut-off function $\chi^{sk}_{\mathit{ext}, r}$ should also be used for defining the graphing function of the perturbed hypersphere $\tilde S^{sk}_\alpha[\vec a_{sk}, \vec \sigma_{sk}]$ in equation \ref{eqn:distribeqn}.
\end{rmk}

The cut-off functions above and the considerations of Section \ref{sec:jacobi} leads to the definition of the space of approximate Jacobi fields of $\approxsol$ needed to construct the right inverse.  Fix $r \in [r_\tau, r_0]$ to be small but independent of $\tau$.  Let $x^t $ be the $t^{\mathit{th}}$ coordinate function for $t = 1, \ldots , n$.  For each $s, k$ recall that $R_{sk}[\vec \sigma_{sk}]$ is the $SO(n+2)$-rotation bringing $S^{sk}_\alpha [\vec \sigma_{sk}]$ into $S_\alpha$ 

\begin{defn}
\label{defn:apkerbasis}
Define the following objects.
\begin{itemize}
	\item The approximate Jacobi fields of $\approxsol$ are the functions
	\begin{equation*}
		\tilde q_{sk}^t  := \chi^{s,k}_{\mathit{ext}, r}  \cdot \left( x^t  \big\vert_{\tilde S^{sk}_{\alpha}[\vec a_{sk} , \vec \sigma_{sk}] } \circ \big( R_{sk}[\vec \sigma_{sk}] \big)^{-1} \right) \, .
	\end{equation*}
	Set $\tilde{\mathcal K} := \linspan_\R \big\{ \tilde q^t_{sk} \: : \: \mbox{all } s, t, k \big\}$. 
	
	\item The set of $G_\Gamma$-invariant approximate Jacobi fields of $\approxsol$ is
	\begin{align*}
		\tilde{\mathcal K}_{\mathit{sym}} &:= \linspan_\R \big\{ \tilde q \in \tilde{\mathcal K} \: : \: \tilde q \circ \rho = \tilde q \:\: \forall \: \rho \in G_\Gamma \big\}
	\end{align*}

	\item Denote the $L^2$-orthogonal complement of $\tilde{ \mathcal K}_{\mathit{sym}}$ in $C^{l,\beta}_{\delta, \mathit{sym}}(\approxsol)$ by $\big[ C^{l,\beta}_{\delta, \ast} (\approxsol) \big]^\perp$ and denote by
	$$\pi : C^{l,\beta}_{\delta, \mathit{sym} }(\approxsol) \rightarrow \big[ C^{l,\beta}_{\delta, \mathit{sym} }(\approxsol) \big]^\perp$$ 
	the corresponding $L^2$-projection operator.
\end{itemize}
\end{defn}

The preliminary notation is in place and the key result of this section of the paper can now be stated and proved.  The method that will be used to construct the right inverse involves patching together \emph{local solutions} of the equation $\linop^\perp (u) = f$ on each of the constituents of the approximate solution. 

\begin{prop}
	\mylabel{thm:seclinest}
	Suppose that the dimension of $\approxsol$ is $n\geq3$ and choose $\delta \in (2-n, 0)$. If $\tau$ and $ \| \vec \sigma \|$ are sufficiently small,  then the operator $$\linop^\perp  : \ctbgs (\approxsol) \rightarrow \big[ \cobgs (\approxsol) \big]^\perp$$ possesses a bounded right inverse $\rightinv$ satisfying the estimate
	$$| \rightinv(f) |_{\ctbg} \leq C | f |_{\cobg}$$
where $C$ is a constant independent of $\tau$ and $\vec \sigma$.  If the dimension of $\approxsol$ is $n=2$ then one can choose $\delta \in (-1, 0)$ and find a right inverse satisfying the estimate
	$$| \rightinv(f) |_{\ctbg} \leq C \eps^{\delta} | f |_{\cobg}$$
where  $\eps := \max_{s, k} \{ \eps_{sk} \}$ is the maximum of all the scale parameters of the necks of $\approxsol$ and $C$ is a constant independent of $\tau$ and $\vec \sigma$.
\end{prop}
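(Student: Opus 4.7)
The plan is to construct $\rightinv$ as a patched parametrix assembled from right inverses on the individual building blocks of $\approxsol$, then improve it to an exact right inverse via a Neumann-type iteration. The weighted Schauder norm has been set up precisely so that the natural scalings of $\mathcal L_\Sigma$ on the catenoidal necks and of $\mathcal L_{\Sph^n}$ on the hypersphere pieces are captured uniformly in $\tau$, which is what makes such a scheme feasible.

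First, I would establish local right inverses on the two model pieces. On the model hypersphere $S_\alpha$ punctured at the $K$ nearest-neighbour points, the operator $\mathcal L_{\Sph^n} = \Delta_{\Sph^n} + n$ has a bounded right inverse acting on weighted Schauder functions orthogonal to $\linspan_\R\{x^t|_{S_\alpha}\}$ provided $\delta\in(2-n,0)$: the lower bound prevents solutions from picking up the Green's function asymptotics of \eqref{eqn:distribeqnasym}, while the upper bound rules out the constant modes which would land back in the kernel spanned by the $x^t$. On the model catenoid $\eps\Sigma$ in the stereographic picture, a parallel analysis using separation of variables by spherical harmonics on $\Sph^{n-1}$ together with ODE analysis of the radial modes produces a right inverse for $\mathcal L_\Sigma$ in the analogous weighted space; the range $\delta\in(2-n,0)$ sits strictly between the exponentially decaying Jacobi modes $J_k$ and the bounded/growing modes $J_0$, $J_1$, $J_{1k}$, so indicial root analysis provides the bound. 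Both model inverses can be made $G_\Gamma$-equivariant by averaging over the relevant stabilizer.

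Second, I would assemble the parametrix. Given $f$ in the complement of the approximate kernel, decompose $f = \sum_{s,k}\chi^{sk}_{\mathit{neck},r}f + \sum_{s,k}\chi^{sk}_{\mathit{ext},r}f$, apply the appropriate local right inverse on each piece after projecting away from that piece's local kernel, multiply each local solution by a slightly wider cut-off, and sum, obtaining $\hat u\in\ctbgs$. Then $\linop^\perp(\hat u) = f + Ef$, where $Ef$ is supported in the transition regions and is built from commutators $[\linop,\chi]u$ together with the small mismatch between the exact local projections and the global projection $\pi$. In the weighted norms $[\linop,\chi]u$ is controlled by derivatives of the cut-offs, restricted to $\|\hat y\|\sim r_\tau$, which pick up a positive power of $r_\tau$ because the catenoidal Jacobi modes decay relative to the chosen weight $\delta$ in this region. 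Thus for $\tau$ and $\|\vec\sigma\|$ small enough one has $|Ef|_{\cobg}\leq \frac12|f|_{\cobg}$, and the Neumann series in $-E$ converts $\hat u$ into a genuine right inverse with the stated bound $|\rightinv(f)|_{\ctbg}\leq C|f|_{\cobg}$.

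The main obstacle is the dimension $n=2$ case, which is exactly why the statement contains the $\eps^\delta$ factor. In dimension two the catenoidal Jacobi field $J_0$ grows logarithmically rather than being bounded, so the admissible weight range shrinks to $\delta\in(-1,0)$ and the model inverse on $\eps\Sigma$ only satisfies the scale-dependent estimate $|u|_{\ctbg}\leq C\eps^\delta|f|_{\cobg}$; the $\eps^\delta$ loss arises because a bounded solution on the neck is measured against $\zeta_\tau\sim\eps_{sk}\cosh(s)$ and picks up exactly this power in the weighted norm. Propagating this through the patching produces the weaker final estimate. Closure of the Neumann iteration nevertheless goes through because the commutator error $E$ gains a positive power of $r_\tau = \eps^{(3n-3)/(3n-2)}$ that dominates the $\eps^\delta$ loss for $\delta>-1$, and because the $G_\Gamma$-invariant projection eliminates precisely the troublesome growing modes that would otherwise destroy the iteration.
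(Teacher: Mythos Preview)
Your overall strategy---patch local right inverses on the neck and spherical pieces, then iterate---is the paper's strategy as well. The gap is in the claim that the commutator error $Ef$ ``picks up a positive power of $r_\tau$''. A local solution $u$ that merely satisfies $|u|_{\ctbg}\leq C|f|_{\cobg}$ obeys $|u|\lesssim\zeta_\tau^{\delta}$ and $|\nabla u|\lesssim\zeta_\tau^{\delta-1}$ pointwise; if the cut-off $\chi$ transitions on an annulus where $\zeta_\tau\sim\rho$ (so $|\nabla\chi|\sim\rho^{-1}$, $|\nabla^2\chi|\sim\rho^{-2}$), then $|[\linop,\chi]u|\lesssim\rho^{\delta-2}$ there, and in the $\cobg$ norm (weight $\zeta_\tau^{2-\delta}\sim\rho^{2-\delta}$) this is $O(1)$, \emph{independent of} $\rho$. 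The weighted norms are scale-matched by design, so cutting off a solution that only saturates the weighted bound yields no gain whatsoever, and your Neumann iteration does not close.

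The paper repairs this with two devices you have not included. First, it proceeds sequentially rather than in parallel: one solves on the necks, cuts off crudely via $\bar u_{\mathit{neck}}=\chi_{\mathit{neck},r}u_{\mathit{neck}}$, and then feeds the residual $\hat f_{\mathit{ext}}=\chi_{\mathit{ext},\kappa r}\bigl(f-\linop(\sum\bar u_{\mathit{neck}})\bigr)$ into the exterior problem; this absorbs the $O(1)$ neck commutator into the exterior right-hand side instead of into the iteration error. Second---and this is the essential point---the exterior solution $u_{\mathit{ext}}$ on $S_\alpha$ is not simply cut off. One splits $u_{\mathit{ext}}=A_{\mathit{ext}}+\tilde u_{\mathit{ext}}$ with $A_{\mathit{ext}}$ locally constant near each puncture $p_j$ and equal to $a_j:=u_{\mathit{ext}}(p_j)$ there, and then extends the constant $a_j$ across the adjoining neck by an explicit Jacobi field $\mathcal J_j$ (the linear combination of $J_0$ and $J_1$ with limit $a_j$ on one end and $0$ on the other). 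The extended function is $\bar u_{\mathit{ext}}=\eta_{\mathit{ext}}u_{\mathit{ext}}+\sum_j\eta^j_{\mathit{neck}}\mathcal J_j$, and now the commutators $[\linop,\eta_{\mathit{ext}}]$ and $[\linop,\eta_{\mathit{neck}}]$ act only on the pieces $\tilde u_{\mathit{ext}}$ and $\tilde{\mathcal J}_j$, both of which vanish to first order at the punctures. That extra order of vanishing is exactly what produces the gain $r_\eps^{1-\delta}$ in the commutator error and allows the iteration to close. Your explanation of the $n=2$ loss is also not quite the mechanism: the $\eps^\delta$ does not come from the neck inverse but from this extension step, where the logarithmic growth of $J_0$ in dimension two degrades the estimate on $\mathcal J_j$.
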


\begin{proof}

The proof of this result  follows broadly the same plan as the proof of the analogous result in Butscher's paper \cite{me5}.  The significant differences occur in the first two steps, namely the derivation of the local solutions on the neck regions and the spherical regions of $\approxsol$.  The third step which consists of the estimates of the local solutions, is essentially unchanged.  Thus only the first two steps will be given here in full detail, and moreover only in the dimension $n\geq 3$ case.  (Due to the double indicial root of the principal part of $\linop$, which is the Laplacian, the proof in $n=2$ if slightly more complicated in a technical sense.  However, the modifications of the $n\geq3$ case needed to prove the $n=2$ case are the same as in \cite[Prop.~13]{me5} and will be left for the reader to carry out.)
	
Suppose that $f \in \big[ \cobgs(\approxsol) \big]^\perp$ is given.  The solution of the equation $\linop (u) = f$ will be constructed in three stages: local solutions on the neck regions will be found; then local solutions on the exterior regions will be found; and finally these solutions will be patched together to form an approximate solution which can be perturbed to a solution by iteration.   To begin this process, write $f = \sum_{s,k} f_{\mathit{ext}}^{sk} +  \sum_{s,k} f_{\mathit{neck}}^{sk}$ where $f_{\mathit{ext}}^{sk} := f \cdot \chi_{\mathit{ext}, r}^{sk}$ and $f_{\mathit{neck}}^{sk} := f \cdot \chi_{\mathit{neck}, r}^{sk}$.  Note that this set of functions inherits symmetries from $G_\Gamma$.  That is, for every $\rho \in G_\Gamma$ that fixes a spherical region or a neck region, the corresponding function is invariant under $\rho$; and for every $\rho \in G_\Gamma$ permuting two spherical regions or two neck regions (perhaps with a change of orientation) then the corresponding pair of functions are permuted (perhaps with an induced symmetry).  In the proof below, the case $G_\Gamma = \{ \mathit{Id} \}$ will actually be presented, since the more general case simply amounts to additional book-keeping.

\paragraph{Step 1.  Local solutions on the neck regions.}
Consider a given neck $\mathcal N := \mathcal N^{sk}$ and for the moment, drop the super- and sub-scripted $\mathit{sk}$ notation for convenience.  Let $K$ denote the canonical stereographic projection used to define the neck $\mathcal N$.  The subset $K \big( \mathcal N(r) \big) \subseteq \R \times \R^n$ is the union of two graphs over an annulus in the $\R^n$ factor, where the graphing functions are $y^1 = \pm \tilde F_\eps (\| \hat y \|)$ as defined in Section \ref{sec:glue} where $\eps := \eps_{sk}$ is the scale parameter of $\mathcal N$.  As such, it is a perturbation of the $\eps$-scaled catenoid $\eps \Sigma$.  Consequently, the function $f_{\mathit{neck}}:= f_{\mathit{neck}}^{sk}$ and the equation $\linop (u) = f_{\mathit{neck}}$ can be pulled back to  $\eps \Sigma$ which carries a perturbation of the catenoid metric $4 \eps^2 g_\Sigma$.  In this formulation, one can view  $f_{\mathit{neck}}$ as a function of compact support on $\eps \Sigma$.  The equation that will be solved in this step is $\frac{1}{4}\mathcal L_{\eps \Sigma} (u) = f_{\mathit{neck}}$ where $\frac{1}{4}\mathcal L_{\eps \Sigma}$ is the linearized mean curvature operator of $\eps \Sigma$ carrying exactly the metric $4\eps^2 g_\Sigma$.   

Let the catenoid be parametrized by $(s, \Theta) \mapsto (\eps \psi(s), \eps \phi(s) \Theta)$ as in equation \ref{eqn:gencat} and let $| \cdot |_{C^{k, \alpha}_\delta (\eps \Sigma)}$ denote the standard weighted $C^{l, \beta}_\delta$ norm on $\eps \Sigma$, so that 
$$
| u |_{{C}^{l, \beta}_\delta (\eps \Sigma)} : = \sum_{i=0}^l \big| (\eps  \cosh(s))^{-\delta+ i } \nabla^i u \big |_{0, \eps \Sigma} + \big[ (\eps  \cosh(s))^{-\delta+ l + \beta}\nabla^l u \big]_{\beta, \eps \Sigma}
$$ 
where the norms and derivatives correspond to the metric on $\eps \Sigma$.  A standard separation of variables argument shows that when $\delta \in (2-n, 0)$, the kernel of the operator $\frac{1}{4}\mathcal L_{\eps \Sigma} : C^{2, \beta}_\delta (\eps \Sigma) \rightarrow C^{0, \beta}_{\delta-2} (\eps \Sigma)$ consists of the linear span of the Jacobi fields $\{ J_{k} : k = 2, \ldots, n+1\}$. By the theory of the Laplace operator on asymptotically cylindrical manifolds, there is a solution $u_{\mathit{neck}} \in \ctbg(\eps \Sigma)$ that satisfies $\frac{1}{4} \mathcal L_{\eps \Sigma} (u_{\mathit{neck}}) =  \big( f_{\mathit{neck}} \big)^\sharp$, where $( \cdot )^\sharp$ denotes the $L^2$-orthogonal projection onto the $L^2$-orthogonal complement of the linear span of the Jacobi fields.  One can write 
$$ \big( f_{\mathit{neck}} \big)^\sharp = f_{\mathit{neck}} +  \sum_{t=1}^n (\lambda_{t}^{ +} \tilde Q^{ t, +} + \lambda_{t}^{-} \tilde Q^{t,-} )$$
where $\tilde Q^{t,+}$ and $\tilde Q^{t, -}$ are the pull-backs of the functions $\chi^{s, k+1}_{\mathit{neck}, 2r} \tilde q_{s, k+1}^t$ and  $\chi^{sk}_{\mathit{neck}, 2r} \tilde q_{sk}^t$ to $\eps \Sigma$, and
$$\lambda^\pm_t := - \frac{ \displaystyle \int_{\eps \Sigma} f_{\mathit{neck}} \cdot J_t}{ \displaystyle \int_{\eps \Sigma} \tilde Q^{t, \pm} \cdot J_t} \, .$$
One can check that $|\lambda^{\pm}_t| \leq C \eps^{\delta - 2 + n} |f|_{\cobg(\eps \Sigma)} \leq C \eps^{\delta - 2 + n} |f|_{\cobg}$ where $C$ is a constant independent of $\eps$.  Hence the estimate $|u_{\mathit{neck}} |_{\ctbg(\eps \Sigma)} \leq C |(f_{\mathit{neck}})^\sharp|_{\cobg(\eps \Sigma)} \leq C |f|_{\cobg}$ is valid, where $C$ is also independent of $\eps$.  Finally, the function $u_{\mathit{neck}}$ can be extended to all of $\approxsol$ by defining $\bar u^{sk}_{\mathit{neck}} := \chi^{sk}_{\mathit{neck}, r} \!\cdot u_{\mathit{neck}}$.  One has the estimate $|\bar u^{sk}_{\mathit{neck}} |_{\ctbg} \leq C  |f|_{\cobg}$.

\paragraph{Step 2. Local solutions on the exterior regions.}  Consider a given spherical region $\mathcal E := \mathcal E^{sk}$ and again, drop the super- and sub-scripted $\mathit{sk}$ notation for convenience. Given the local solution $\bar u_{\mathit{neck}}$ constructed in the previous step, choose a small $\kappa \in (0,1)$ and define $\hat f_{\mathit{ext}} := \hat  f_{\mathit{ext}}^{sk}$ where
$$
\hat  f_{\mathit{ext}}^{sk}  : =  \chi_{\mathit{ext}, \kappa r}^{sk}  \Biggl( f -  \linop \Biggl(\, \sum_{s',k'} \bar u^{s'k'}_{neck} \Biggr)  \Biggr) \, .
$$
This function vanishes within an $\eps := \eps_{sk}$-independent distance from the union of all the neck regions associated to $\mathcal E$.  Therefore one can determine without difficulty $
|  \hat  f_{\mathit{ext}}   |_{C^{0, \beta}} \leq C_\kappa   |  f |_{\cobg}
$ for some constant $C_\kappa$ that depends on $\kappa$ and $\delta$.  Here, $| \cdot |_{C^{0,\beta}}$ is the un-weighted Schauder norm.

The function $\hat f_{\mathit{ext}}$ can be viewed as a function of compact support on the perturbed hypersphere $\tilde S_\alpha[\vec a, \vec \sigma]$.  Since $\tilde S_\alpha[\vec a, \vec \sigma]$ is a normal graph over the hypersphere $S_\alpha[ \vec \sigma]$, this function can be pulled back to the hypersphere $S_\alpha[\vec \sigma]$ and then to the standard hypersphere $S_\alpha$ vanishing in the neighbourhood of certain points $\{ p_1, \ldots, p_K \}\subseteq  S_\alpha$ where $S_\alpha$ is attached to other perturbed hyperspheres through necks.  The metric  carried by $S_\alpha$ in this identification is a perturbation of the standard induced metric $\sin^2(\alpha) g_{\Sph^{n}}$.  However, the equation that will be solved here is $\mathcal L_{\alpha} (u) = \hat f_{\mathit{ext}}$ up to projection onto the approximate co-kernel, where $\mathcal L_{\alpha}$ is the linearized mean curvature operator of $S_\alpha$ when it carries the un-perturbed metric $\sin^2(\alpha) g_{\Sph^{n}}$. 

Compute the quantities $\mu_{tt'} :=  \int_{S_\alpha}\tilde q_t \cdot  x^{t'}\big|_{S_\alpha}$ where $\tilde q_t \in \tilde{\mathcal K}$ are the Jacobi fields supported on $\tilde S_\alpha[\vec a, \vec \sigma]$ and pulled back to $S_\alpha$ and $x^t \big|_{S_\alpha}$ are the coordinate functions restricted to $S_\alpha$. Set $\mu^{tt'}$ equal to the components of the inverse of the matrix whose components are $\mu_{tt'}$.   (It can easily be verified that this matrix is invertible because $\tilde q_t$ is almost equal to $x^t\big|_{S_\alpha}$ and these functions form an $L^2$-orthogonal set.)  Now
$$\big( \hat f_{\mathit{ext}} \big)^\sharp := \hat f_{\mathit{ext}}  - \sum_{t, t'} \tilde q_{t'} \cdot \mu^{tt'} \cdot \int_{S_\alpha} \hat f_{\mathit{ext}} \cdot x^{t}\big|_{S_\alpha} $$ 
is orthogonal to the coordinate functions restricted to $S_\alpha$.   The equation $\mathcal L_\alpha (u_{\mathit{ext}}) = \big( \hat f_{\mathit{ext}} \big)^\sharp$ can now be solved for $u_{\mathit{ext}}$ in $C^{2,\beta}(S_\alpha)$.  The solution satisfies the estimate $|u_{\mathit{ext}}|_{C^{2,\beta}(S_\alpha)} \leq C_\kappa |(\hat f_{\mathit{ext}})^\sharp|_{\cobg(S_\alpha)}$.  

The function $u_{\mathit{ext}}$ can now be extended to all of $\approxsol$ as follows.  Suppose that $u_{\mathit{ext}}(p_j) := a_j$ for $j = 1, \ldots K$ and let $A_{\mathit{ext}} : S_\alpha \rightarrow \R$ be a smooth function that is locally constant near each $p_j$ satisfying $A_{\mathit{ext}}(p_j) = a_j$.  Then $u_{\mathit{ext}} = A_{\mathit{ext}} + \tilde u_{\mathit{ext}}$ where $\tilde u_{\mathit{ext}}$ is smooth function satisfying $\tilde u_{\mathit{ext}} = \mathcal O( \mathrm{dist}( \cdot , p_j) )$ near each $p_j$.   For $j=1, \ldots, K$, let $\mathcal J_j$ be the linear combination of the Jacobi fields $J_0$ and $J_1$ defined on the neck adjoining $S_\alpha$ at the point $p_j$ that has limit $a_j$ on the end of this neck attached to $S_\alpha$ and has limit zero on the other end of this neck.  Note that $\mathcal J_j = a_j + \tilde{\mathcal J_j}$ where $\tilde{\mathcal J}_j = \mathcal O( \mathrm{dist}( \cdot , p_j) )$ in the part of this neck overlapping with $S_\alpha$.    Now define
$$\bar u^{sk}_{\mathit{ext}} := \eta_{\mathit{ext}} u_{\mathit{ext}} + \sum_{j=1}^K  \eta_{\mathit{neck}}^{j} \mathcal J_j  \, .$$
The extended function $\bar u_{\mathit{ext}}$ satisfies the estimate $|\bar u_{\mathit{ext}}|_{\ctbg} \leq C_\kappa | ( \hat f_{\mathit{ext}})^\sharp |_{\cobg}$ for some constant $C_\kappa$ depending  on $\kappa$ and $\delta$ but not $\eps$.

\paragraph*{Step 3. Estimates and convergence.}  Local solutions $\bar u^{sk}_{\mathit{neck}}$ and $\bar u^{sk}_{\mathit{ext}}$ on the neck regions and on the exterior regions, respectively, have been found and extended to all of $\approxsol$.  Define the function $\bar u := \sum_{s, k} \bar u^{sk}_{\mathit{neck}} + \sum_{s, k} \bar u^{sk}_{\mathit{ext}}$.   Then a long computation yields
\begin{equation}
	\label{eqn:cvgce}
	\begin{aligned}
		\linop^\perp (\bar u) - f &= \pi \circ \sum_{s,k} \biggl( [ \linop, \eta^{sk}_{\mathit{ext}} ] ( \tilde u^{sk}_{\mathit{ext}}) +  \eta_{\mathit{ext}}^{sk} \big( \linop - \mathcal L_\alpha \big)( u^{sk}_{\mathit{ext}})  + \sum_j [ \linop, \eta^{sk}_{\mathit{neck}}]( \tilde{ \mathcal J}_j ) \\ 
		&\qquad \qquad  +  \sum_j \eta^{sk}_{\mathit{neck}} \big( \linop - \tfrac{1}{4} \mathcal L_{\eps_{sk} \Sigma} \big) ( \tilde{\mathcal J}_j)   + \chi^{sk}_{\mathit{neck}, \kappa r} \big(  \linop - \tfrac{1}{4} \mathcal L_{\eps_{sk} \Sigma} \big) (\bar u^{sk}_{\mathit{neck}}) \biggr) \phantom{\sum_{s,k}} \hspace{-4ex}
\end{aligned}
\end{equation}
where  $[\mathcal L, \eta] (u) :=\mathcal L (\eta u) - \eta \mathcal L(u)$.  Each term in \eqref{eqn:cvgce} can now be shown to have small $\cobg$ norm using the same estimation technique as in \cite{me5}.  That is: the $[\linop, \eta^{sk}_\ast]$ terms are small because $ \tilde u^{sk}_{\mathit{ext}}$ and $\tilde{\mathcal J}_j$ have stronger decay (than $u^{sk}_{\mathit{ext}}$ and $\mathcal J_j$) in the support of gradient of $\eta^{sk}_\ast$; while the $\linop - \mathcal L_\ast$ terms are small because $\linop$ differs very little from both $\mathcal L_\alpha$ and $\frac{1}{4} \mathcal L_{\eps_{sk} \Sigma}$ in the regions upon which the arguments of these operators are supported. The result is that if all $\kappa$ parameters are sufficiently small to begin with, and then all $\eps$ parameters are made as small as needed, then it is possible to achieve $|\linop^\perp (\bar u) - f |_{\cobg} \leq \frac{1}{2} |f|_{\cobg}$.  The estimate $|\bar u|_{\ctbg} \leq C |f|_{\cobg}$ can also be found using the same techniques.  The proof of the proposition now follows by a standard iteration argument.
\end{proof}

\subsection{The Non-Linear Estimates}
\mylabel{subsec:nonlinest}

Invoking the Banach space inverse function theorem to solve the equation $\pi \circ \difop(f) = 0$ requires two more estimates in addition to the right inverse and linear estimate from the previous section.  It is necessary to show that $\pi \circ \difop(0)$ has small $\cobg$ norm; and it is necessary to show that $\Dif( \pi \circ \difop) (f)  -   \linop^\perp$ can be made to have small $\ctbg$-operator norm if $f$ is chosen to have sufficiently small $\ctbg$ norm.  These two estimates are in most respects identical to those computed in Butscher's paper \cite{me5} and will thus only be sketched here.

\begin{prop}
	\mylabel{prop:error}
	The quantity $\pi \circ \difop(0)$ satisfies the following estimate.  If $\tau$ and $ \vec \sigma$ are sufficiently small, then there exists a constant $C$ independent of $\tau$ and $\vec \sigma$ so that 
	\begin{equation}
		\mylabel{eqn:error}
		\vert \pi \circ \difop(0) \vert_{\cobg} \leq C r_\eps^{2 - \delta}
	\end{equation}
	where $\eps :=  \max \{ \eps_{sk} \}$ is the maximum of all the scale parameters of the necks of $\approxsol$ and $r_\eps := \eps^{(3n-3)/(3n-2)}$.
\end{prop}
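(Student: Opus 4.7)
The plan is to estimate $\difop(0) = H_{\approxsol} - H_\alpha$ by decomposing $\approxsol$ into its constituent spherical regions $\mathcal E^{sk}$, neck regions $\mathcal N^{sk}$, and transition regions $\mathcal T^{sk, \pm}$, and bounding the $\cobg$-norm of $\difop(0)$ separately on each. The dominant contribution will come from the transition regions, since $r_\eps$ is by design the radius at which the asymptotic matching of the catenoid to the perturbed hypersphere is optimized.

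On a spherical region $\mathcal E^{sk}$, the approximate solution is the normal graph over $S^{sk}_\alpha[\vec\sigma_{sk}]$ generated by $G_{sk}$. Since $\chi^{sk}_{\mathit{ext},r} \equiv 1$ on $\mathcal E^{sk}$, equation \eqref{eqn:distribeqn} yields $\mathcal L_{\Sph^n}(G_{sk}) = -\sum_{j,t} a_j \lambda_j^t q^t$ there, so the quasilinear mean curvature expansion gives $\difop(0) = -\sin^{-2}(\alpha)\sum_{j,t} a_j \lambda_j^t q^t + Q(G_{sk})$, with $Q$ quadratic in $G_{sk}$ and its derivatives. Because $\tilde q^{\,t}_{sk}$ coincides with $q^t$ on $\mathcal E^{sk}$ (via the rotation $R_{sk}$) and the symmetric initial configuration arranges that $\sum a_j \lambda_j^t \tilde q^{\,t}_{sk} \in \tilde{\mathcal K}_{\mathit{sym}}$, the projection $\pi$ annihilates the linear piece, leaving only the quadratic term together with the corrections coming from the extension of $\tilde q^{\,t}_{sk}$ through the adjoining transition regions. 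Using $|a| = \mathcal O(\eps^{n-1})$ from \eqref{eqn:match}, these residuals are of order $\eps^{2(n-1)}$, which is dominated by $r_\eps^{2-\delta}$ for every $\delta \in (2-n, 0)$.

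On a neck region $\mathcal N^{sk}$ the surface is a truncated scaled catenoid $\eps_{sk}\Sigma$ embedded into the sphere via the inverse stereographic projection. The Euclidean mean curvature of $\eps_{sk}\Sigma$ vanishes identically, while the ambient metric differs from Euclidean only by the conformal factor $A^{-2}$ with $A = 1 + \mathcal O(\|y\|^2)$; since the neck lies within $\|y\| = \mathcal O(\eps_{sk})$, the unweighted error is uniformly bounded, being essentially $H_\alpha$ plus a small conformal correction. The weight function satisfies $\zeta_\tau = \eps_{sk}\cosh(s) \leq r_{\eps_{sk}} \leq r_\eps$ throughout $\mathcal N^{sk}$, so the weighted contribution is bounded by a universal multiple of $r_\eps^{2-\delta}$.

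The heart of the analysis is in the transition regions $\mathcal T^{sk,\pm}$, where $\|\hat y\| \in [r_{\eps_{sk}}, 2 r_{\eps_{sk}}]$ and the graphing function is the blended $\tilde F^\pm_{\alpha,\tau}$ from \eqref{eqn:mergedfn}. The matching conditions \eqref{eqn:match} align the leading constant and the leading $1/\|\hat y\|^{n-2}$ terms of the asymptotic expansions \eqref{eqn:greensurfasym} and \eqref{eqn:catenoidasym}, so that the residual discrepancy between $\eps F(\|\hat y\|/\eps)$ and $\mathcal G(\|\hat y\|)$ reduces to $\|\hat y\|^2/(2r) + \eps^{3n-3}\|\hat y\|^{4-3n}/(2(3n-4))$, each of size $r_\eps^2$ at $\|\hat y\| \sim r_\eps$. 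Because the cutoff $\eta(\|\hat y\|/r_\eps)$ has derivatives of sizes $r_\eps^{-1}$ and $r_\eps^{-2}$, the cross terms $\eta''(F_1 - F_2)$ and $\eta'(F_1 - F_2)'$ produced by the quasilinear mean curvature operator are only of order $\mathcal O(1)$ unweighted; multiplying by $\zeta_\tau^{2-\delta} \sim r_\eps^{2-\delta}$ and using that $\pi$ is bounded on the weighted spaces then yields the announced bound. The main obstacle is the bookkeeping in this step: one must verify that the $\mathcal O(1)$ cutoff contributions, combined with the conformal factor $A^{-2}$ and with the quasilinear remainder of the mean curvature operator, never produce a piece of order $r_\eps^{-k}$ with $k>0$---this is precisely what the matching equations \eqref{eqn:match} enforce by pinning down the first two terms of both asymptotic expansions.
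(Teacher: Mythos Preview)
Your proposal is correct and follows essentially the same approach as the paper: decompose into spherical, neck, and transition regions; observe that on the spherical regions the linear part $(\Delta_{\Sph^n}+n)G_{sk}$ lands in $\tilde{\mathcal K}_{\mathit{sym}}$ by construction and is therefore annihilated by $\pi$; and then invoke the matching conditions \eqref{eqn:match} together with the choice $r_\eps = \eps^{(3n-3)/(3n-2)}$ to control the remaining quadratic and cut-off contributions. One small refinement: your bound ``of order $\eps^{2(n-1)}$'' for the quadratic remainder on $\mathcal E^{sk}$ is only the interior pointwise size --- near the boundary $\mathrm{dist}(\cdot,p_j)\sim r_\eps$ the quadratic term $(\nabla G)^2 \sim \eps^{2n-2}/r_\eps^{2n-2}$ is much larger, and it is the weight $\zeta_\tau^{2-\delta}\sim r_\eps^{2-\delta}$ that brings the weighted contribution back down to $Cr_\eps^{2-\delta}$, exactly as the paper records.
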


\begin{proof} 

The estimate \eqref{eqn:error} can be computed as in \cite{me5} by verifying separately in the spherical regions, in the transition regions, and in the neck regions of $\approxsol$ that the mean curvature is sufficiently close to $H_\alpha$, except with one significant modification in the first of these computations.  To see this, consider one fixed spherical region $\mathcal E^{sk}$ pulled back to the standard hypersphere $S_\alpha$.  The expression for the mean curvature of a normal graph over $S_\alpha$ when the graphing function is $G := G_{sk}$, as given in \cite{me5}, reads
\begin{equation}
	\mylabel{eqn:normgrmeancurv}
	\begin{aligned}
		H \big( \exp( G N_\alpha)(S_\alpha) \big) - H_\alpha &=  \frac{-\Delta G +  n \sin(\alpha + G) \cos(\alpha + G)}{A \sin(\alpha +  G)}\\[1ex]
		&\qquad  - \frac {\nabla^2 G (\nabla G, \nabla G) - \cos(\alpha + G) \sin(\alpha+G) \| \nabla G \|^2}{A^3 \sin(\alpha +  G)}  - H_\alpha
	\end{aligned}
\end{equation}
where $\nabla$ and $\Delta$ are the covariant derivative and the Laplacian of the standard metric of $\Sph^n$, and $A = \big( \sin^2(\alpha + G) + \| \nabla G \|^2  \big)^{1/2}$.   By formally expanding this expression in when $G$ is small as in \cite{me5}, one finds that the largest term is $- ( \Delta + n)(G)$.  The quantity $(\Delta + n) (G)$ equals a term in $\tilde{\mathcal K}_{\mathit{sym}}$ by definition.  Moreover, as in \cite{me5} the expansion of $G$ given in equation \eqref{eqn:distribeqnasym} and the estimate $\| \vec a_{sk} \| = \mathcal O(\eps^{n-1})$ yields $| H(\approxsol) - H_\alpha + (\Delta + n) (G)|_{C^0_{2-\delta} (\mathcal E_\eps )} \leq C r_\eps^{2-\delta}$.  Under $L^2$ projection, $(\Delta + n) (G)$ disappears so that the desired estimate follows.
\end{proof}  

\begin{prop}
    \mylabel{prop:nonlin}
    The linearized mean curvature operator satisfies the following general estimate.  If $\tau$ and $ \vec \sigma$ are sufficiently small and $f \in \ctbgs(\approxsol)$ has sufficiently small $\ctbg$ norm, then there exists a constant $C$ independent of $\tau$ and $\vec \sigma$ so that
    \begin{equation}
        \mylabel{eqn:nonlin}
        \big\vert \Dif (\pi \circ \Phi_{\alpha, \tau \vec \sigma} ) (f) (u) - \linop^\perp ( u ) \big\vert_{\cobg} \leq C \eps^{\delta - 1} |f|_{\ctbg}Ê\, \vert u \vert_{\ctbg }
    \end{equation}
     for any function $u \in \ctbgs(\approxsol)$, where $\eps :=  \max \{ \eps_{sk} \}$ is the maximum of all the scale parameters of the necks of $\approxsol$.
\end{prop}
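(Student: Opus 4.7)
The plan is to reduce the estimate to a uniform pointwise bound on the second-order Taylor remainder of $\difop$ and then to track weights region by region. First, since the $L^2$-projection $\pi$ is bounded on $\cobgs$, it is enough to estimate $|\Dif \difop(f)(u) - \linop(u)|_{\cobg}$ without the projection. By the fundamental theorem of calculus this difference equals $\int_0^1 \Dif^2 \difop(sf)(f,u) \, \dif s$, so matters reduce to bounding $|\Dif^2 \difop(g)(f,u)|_{\cobg}$ uniformly for $g$ in a small $\ctbg$-ball about the origin (which is secured by the smallness of $|f|_{\ctbg}$).

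I would then examine $\Dif^2 \difop$ starting from the explicit expression \eqref{eqn:normgrmeancurv} for the mean curvature of a normal graph, together with the analogous formulae valid in the stereographic coordinates used to construct the catenoidal necks and transition regions. Because $\difop$ is a second-order quasi-linear operator whose top-order coefficients depend only on the $0$- and $1$-jet of the graphing function, the quantity $\Dif^2 \difop(g)(f,u)$ consists of a finite sum of terms of the schematic form $\mathcal C(g, \nabla g, \nabla^2 g) \cdot (\nabla^i f)(\nabla^j u)$ with $i, j \in \{0, 1, 2\}$ and $i+j \leq 3$; crucially, the term $(\nabla^2 f)(\nabla^2 u)$ is ruled out by quasi-linearity. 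The coefficients $\mathcal C$ are smooth functions, uniformly bounded whenever $|g|_{\ctbg}$ is sufficiently small. I expect the main technical obstacle of the argument to be the bookkeeping required to verify this schematic form rigorously — that is, confirming the absence of the $(\nabla^2 f)(\nabla^2 u)$ contribution and the uniform boundedness of the $\mathcal C$ coefficients on the small $\ctbg$-ball.

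The remainder of the proof is a weighted estimate carried out region by region. On the spherical and transition regions, the weight $\zeta_\tau$ is bounded away from zero uniformly in $\tau$, so the standard multiplicative properties of the weighted Schauder norm bound the contribution by $C |f|_{\ctbg} \cdot |u|_{\ctbg}$, which is better than required. On a neck region, where $\zeta_\tau$ ranges over $[\eps_{sk}, r_0]$, the pointwise bound
\[
	\big| (\nabla^i f)(\nabla^j u) \big| \leq |f|_{\ctbg} \cdot |u|_{\ctbg} \cdot \zeta_\tau^{2\delta - i - j}
\]
combined with the multiplication by $\zeta_\tau^{2-\delta}$ (needed to convert to a $\cobg$-bound) produces a factor of $\zeta_\tau^{\delta + 2 - i - j}$. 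For $\delta < 0$ and $i + j \leq 3$ this exponent is negative in the worst cases, so the supremum is attained at the minimum of $\zeta_\tau$. The extremal cases $i+j = 3$ coming from $(\nabla f)(\nabla^2 u)$ and $(\nabla^2 f)(\nabla u)$ each produce precisely the decisive factor $\eps^{\delta-1}$, where $\eps = \max_{s,k} \eps_{sk}$. The H\"older seminorm is handled identically using the product rule for H\"older seminorms, and assembling the contributions from all the schematic terms and all the regions yields the stated estimate \eqref{eqn:nonlin} along the lines of the corresponding argument in \cite{me5}.
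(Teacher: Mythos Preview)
Your argument is correct and is precisely the content of the ``scaling argument'' that the paper invokes by reference to \cite{me5}; indeed, the paper's own proof here is a single sentence deferring to that reference, so you have supplied considerably more detail than the paper does. The only minor point worth flagging is that, since $\delta-1<0$, the supremum $\zeta_\tau^{\delta-1}$ over a neck is attained at the \emph{smallest} $\eps_{sk}$ rather than the largest, so the stated bound with $\eps=\max\{\eps_{sk}\}$ implicitly relies on all neck parameters being comparable---but this is a feature of the paper's statement, not a flaw in your reasoning.
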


\begin{proof}
This follows from a scaling argument exactly as in \cite{me5}.
\end{proof}

\subsection{The Solution of the Non-Linear Problem up to Finite-Dimensional Error}

The linear and non-linear estimates derived in the previous sections can now be combined to solve the equation $\pi \circ \difop(f) = 0$ using the Banach space inverse function theorem up to a finite-dimensional error term contained in the kernel of $\pi$.  

\begin{prop}
	\mylabel{prop:secperturb}
	If $\tau$ and $\vec \sigma $ are sufficiently small, then there exists $f_{\tau, \vec \sigma} \in \ctbg (X)$ satisfying $\pi \circ \difop (f_{\alpha, \tau,\vec \sigma}) = 0$ and there exists a constant $C$ independent of $\tau$ and $\vec \sigma$ so that 
	$$| f_{\tau, \vec \sigma} |_{\ctbg} \leq C(\eps) \cdot r_\eps^{2-\delta}$$
	where $C(\eps) = \mathcal O(1)$ in dimension $n\geq 3$ and $C(\eps) = \mathcal O(\eps^{\delta})$ in dimension $n = 2$.   Here $\eps :=  \max \{ \eps_{sk} \}$ is the maximum of all the scale parameters of the necks of $\approxsol$ and $r_\eps := \eps^{(3n-3)/(3n-2)}$.  As a result, the hypersurface obtained by deforming $\approxsol$ in the normal direction by an amount determined by $f_{\tau, \vec \sigma}$ is embedded if $\approxsol$ is embedded.
\end{prop}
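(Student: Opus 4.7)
The plan is to directly apply the Banach space inverse function theorem stated earlier to the smooth map
$$
\pi \circ \difop : \ctbgs(\approxsol) \longrightarrow \bigl[ \cobgs(\approxsol) \bigr]^\perp \, ,
$$
taking $z = 0$, so that the desired $f_{\tau, \vec \sigma}$ is produced as the unique small solution of $\pi \circ \difop(f_{\tau, \vec \sigma}) = 0$ in the ball of radius $R$ around the origin, with $E := \pi \circ \difop(0)$. The three ingredients of the IFT are supplied by Propositions \ref{thm:seclinest}, \ref{prop:nonlin}, and \ref{prop:error} respectively: a bounded right inverse $\rightinv$ for $\linop^\perp$, the Lipschitz dependence of $\Dif(\pi \circ \difop)$ on its base point, and an estimate on the error $|E|_{\cobg}$.

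In the dimension $n \geq 3$ case, Proposition \ref{thm:seclinest} gives the linear estimate constant $C$ independent of $\tau$ and $\vec \sigma$. The non-linear estimate of Proposition \ref{prop:nonlin} then provides, for any $f$ in the ball of radius $R$ in $\ctbgs(\approxsol)$, the bound
$$
\bigl\vert \Dif(\pi \circ \difop)(f)(u) - \linop^\perp(u) \bigr\vert_{\cobg} \leq C' \eps^{\delta - 1} R \, \vert u \vert_{\ctbg} \, .
$$
I would therefore choose $R := \eps^{1-\delta}/(2 C C')$, which makes the right-hand side at most $\frac{1}{2C} \vert u \vert_{\ctbg}$, satisfying hypothesis \eqref{eqn:iftesttwo}. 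Hypothesis \eqref{eqn:iftestthree} with $z = 0$ then reduces to
$$
\vert \pi \circ \difop(0) \vert_{\cobg} \leq \frac{R}{2C} = \frac{\eps^{1-\delta}}{4 C^2 C'} \, .
$$
By Proposition \ref{prop:error}, the left-hand side is at most a constant multiple of $r_\eps^{2-\delta} = \eps^{(2-\delta)(3n-3)/(3n-2)}$, so the required inequality holds provided $(2-\delta)(3n-3)/(3n-2) > 1 - \delta$, which a short calculation reduces to $\delta > 4 - 3n$; this is automatic in the allowed range $\delta \in (2-n, 0)$ once $n \geq 3$. The IFT then yields $f_{\tau, \vec \sigma}$ with
$$
\vert f_{\tau, \vec \sigma} \vert_{\ctbg} \leq 2 C \vert E \vert_{\cobg} \leq C'' \, r_\eps^{2-\delta} \, ,
$$
so $C(\eps) = \mathcal{O}(1)$ as claimed.

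For the dimension $n = 2$ case, the linear estimate in Proposition \ref{thm:seclinest} carries an extra factor $\eps^\delta$ (with $\delta \in (-1,0)$, so $\eps^\delta$ is large). This propagates through the IFT setup: the relevant constant becomes $C \eps^\delta$ in place of $C$, which in turn forces $R := c \eps^{1-2\delta}$ from the non-linear estimate, and the hypothesis \eqref{eqn:iftestthree} now becomes $r_\eps^{2-\delta} \lesssim \eps^{1-3\delta}$. Comparing exponents with $r_\eps = \eps^{3/4}$ gives $3(2-\delta)/4 > 1-3\delta$, i.e.\ $\delta > -2/9$, which one checks is compatible with the range $\delta \in (-1, 0)$ once $|\delta|$ is chosen sufficiently small. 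The IFT output in this case carries the extra factor, yielding the estimate $|f_{\tau, \vec \sigma}|_{\ctbg} \leq C \eps^\delta r_\eps^{2-\delta}$, i.e.\ $C(\eps) = \mathcal O(\eps^\delta)$.

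Finally, since the solution $f_{\tau, \vec \sigma}$ has small $\ctbg$-norm and in particular small $C^{2,\beta}$-norm (the weight $\zeta_\tau$ is bounded below by a positive multiple of $\eps$ on the necks), the deformation $\phi_{f_{\tau, \vec \sigma}}(\approxsol)$ is a genuine normal graph; embeddedness is preserved under small $C^1$-perturbations, giving the last assertion. The main technical obstacle in executing this plan is the tracking of the $\eps$- and $r_\eps$-exponents through the three IFT hypotheses simultaneously, in particular verifying that the range $\delta \in (2-n, 0)$ (respectively $(-1, 0)$) is consistent with all the inequalities, and handling the degenerated linear estimate in the $n=2$ case.
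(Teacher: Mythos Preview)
Your proposal is correct and follows essentially the same approach as the paper's own proof: both apply the Banach space inverse function theorem directly, using Proposition~\ref{thm:seclinest} for the linear estimate, Proposition~\ref{prop:nonlin} to determine the radius $R$, and Proposition~\ref{prop:error} to verify that $|\pi\circ\difop(0)|_{\cobg}$ lies within $R/(2C_L)$, and both conclude embeddedness by comparing the size of $f_{\tau,\vec\sigma}$ to the neck width $\eps$. Your explicit exponent computations (e.g.\ $\delta>4-3n$ for $n\geq 3$ and $\delta>-2/9$ for $n=2$) make precise what the paper leaves as ``$\delta$ is chosen properly''.
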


\begin{proof}
The linearization of $\pi \circ \difop$ at zero is $\Dif ( \pi \circ \difop )(0) = \linop^\perp$ and this operator possesses a bounded right inverse by Proposition \ref{thm:seclinest}.  The Banach space inverse function theorem can thus be applied to the equation $\pi \circ \difop (f) = 0$ provided that the three fundamental estimates \eqref{eqn:iftestone}, \eqref{eqn:iftesttwo} and \eqref{eqn:iftestthree} described in Section \ref{sec:jacobi} can be established.  The construction of the right inverse and its bound in Proposition \ref{thm:seclinest} constitutes the first of these estimates. One has
$$| \rightinv (f) |_{\ctbg} \leq C_L(\eps) | f |_{\cobg} \, ,$$
where $C_L(\eps) = \mathcal O(\eps^{\delta})$ in dimension $n=2$ and $C_L(\eps) = \mathcal O(1)$ in higher dimensions.  Now in order to achieve the bound
$$| \Dif (\pi \circ \difop) (f)(u) -  \linop^\perp (u) |_{\cobg} \leq \frac{1}{2 C_L(\eps)} |u|_{\ctbg} \, ,$$
for any $u \in \ctbgs(\approxsol)$, one must have $|f|_{\ctbg} \leq R$ where $R = \mathcal O(\eps^{1-2\delta})$ in dimension $n=2$ or $R = \mathcal O(\eps^{1-\delta})$ in higher dimensions.  The inverse function theorem now asserts that a solution $f := f_{\tau, \vec \sigma}$  of the deformation problem can be found if $\approxsol$ satisfies the estimate 
$$| \pi \circ \difop(0)  |_{\cobg} \leq \frac{R}{2 C_L } = \mathcal O(\eps^{1 - 3 \delta}) \, .$$
But since Proposition \ref{prop:error} asserts that 
$$| \pi \circ \difop(0)  |_{\cobg} = \mathcal O(\eps^{(2-\delta)(3n-3)/(3n-2)})\, ,$$ 
this is true so long as $\eps$, $\tau$ and $\| \vec \sigma \|$ are sufficiently small and $\delta$ is chosen properly.

As a further consequence of these estimates, the Banach space inverse function theorem asserts that the solution of the equation $\pi \circ \difop(f_{a, \tau, \vec \sigma}) = 0$ satisfies the estimate 
$$| f_{\tau, \vec \sigma} |_{\ctbg} = \mathcal O \big(  C_L(\eps) \cdot  \eps^{(2-\delta)(3n-3)/(3n-2)}  \big)$$
which is much smaller that $\eps$.  Therefore the size of the perturbation of $\approxsol$ created by the normal deformation of magnitude $f_{\tau, \vec \sigma}$ is much smaller than the width of $\approxsol$ at its narrowest points, i.e.~in the neck regions where the width is $\mathcal O(\eps)$.  Thus $\approxsol$ remains embedded under this normal deformation.  
\end{proof}

\section{Solution of the Finite-Dimensional Problem}
\label{sec:balancing}

\subsection{The Balancing Map}

Proposition \ref{prop:secperturb} shows that the equation $\difop (f) = 0$ can be solved up to a finite dimensional error term; i.e.~a function $f_{\tau, \vec \sigma} \in  \ctbgs(\approxsol)$ can be found so that only the $L^2$-projection of $\difop( f_{\tau, \vec \sigma})$ to the subspace $\tilde{ \mathcal K}_{\mathit{sym}}$ fails to vanish identically.   Since there is such a function for each sufficiently small $\vec \sigma \in \mathcal D_{\Gamma}$ and thus one can consider the map $\vec \sigma \mapsto \difop( f_{\tau, \vec \sigma})$ as a function of $\vec \sigma$.  It will now be shown that under the hypotheses of Main Theorem \ref{result1} there is a special choice of $\vec \sigma$ for which $\difop(f_{\tau, \vec \sigma})$ vanishes completely.  Therefore the solution $f_{\tau, \vec \sigma}$ for this choice of $\vec \sigma$ yields the desired deformation of $\approxsol$ into an exactly CMC hypersurface.  In order to show how this special value of $\vec \sigma$ is found, one must first understand in greater detail the relationship between $\vec \sigma$ and the quantity $(\mathrm{id} - \pi) \circ  \difop (f_{\tau, \vec \sigma})$ where $\pi$ is the $L^2$-projection onto $\tilde{\mathcal K}_{\mathit{sym}}^{\perp}$.

To analyze this relationship properly,the first step to re-phrase the problem slightly.  Let $\tilde q_1, \ldots, \tilde q_N$ be a basis for $\tilde{\mathcal{K}}_{\mathit{sym}}$ constructed from an $L^2$-orthonormal basis for the eigenfunctions of $\Delta_S + n$ on $S_\alpha$ as in Definition \ref{defn:apkerbasis}.  Next, define a slightly different set of functions $\tilde q_1', \ldots, \tilde q_N'$ obtained from the $\tilde q_1, \ldots, \tilde q_N$ by replacing each $\chi_{\mathit{ext}, r}$ appearing in the definition of a $\tilde q_j$ with $\chi_{\mathit{ext}, r_\eps}$.  As usual, here $\eps := \max \{ \eps_{sk} \}$ and $r_\eps := \eps^{(3n-3)/(3n-2)}$  where $\eps_{sk}$ is the scale parameter of the $k^{\mathit{th}}$ neck along the geodesic $\gamma_s$.  Now one can decompose
$$(\mathrm{id} - \pi) \circ  \difop (f_{\tau, \vec \sigma}) = \sum_{i, j =1}^N M^{ij}(\vec \sigma) \cdot B_i(\vec \sigma) \cdot \tilde q_j$$
where $B_i : \mathcal D_{\Gamma} \rightarrow \R$ are real-valued functions of the displacement parameters defined by 
\begin{equation}
	\label{eqn:balmap}
	B_j(\vec \sigma) := \int_{\phi_{f_{\tau, \vec \sigma}} (\approxsol) }  \difop (f_{\tau, \vec \sigma}) \cdot \tilde q_j'
\end{equation}
and $M^{ij}(\vec \sigma)$ are the coefficients of the inverse of the matrix with coefficients $\int_{\phi_{f_{\tau, \vec \sigma}} (\approxsol) } \tilde q_i \cdot \tilde q_j'$.   One can check that this matrix is a small perturbation of the identity matrix and is indeed invertible. Also, $\phi_{f_{\tau, \vec \sigma}}$ in \eqref{eqn:balmap} is the normal deformation corresponding to $f_{\tau, \vec \sigma}$.

\begin{defn}
	\mylabel{defn:balancingmap}
	The \emph{balancing map} of $\approxsol$ with respect to the chosen basis $\{ \tilde q_1,\ldots \tilde q_N \}$ of $\tilde {\mathcal{K}}_{\mathit{sym}}$ is the function $B_{\tau} : \mathcal D_{\Gamma} \rightarrow \R^{K}$ given by 
	\begin{equation*}
		B_{\tau} (\vec \sigma) : = \big( B_1 (\vec \sigma), \ldots B_N(\vec \sigma) \big) \, ,
	\end{equation*}
	and each $B_j : \mathcal{D}_\Gamma \rightarrow \R$ is defined as in \eqref{eqn:balmap}.\end{defn}	

In terms of the balancing map, what remains to be done in order to prove Main Theorem \ref{result1} is to find a value of $\vec \sigma$ for which $B_{\tau}(\vec \sigma) = 0$.

\subsection{Approximating the Balancing Map} 
\mylabel{subsec:balmap}

The balancing map can be better understood by deriving an approximation of the map which is  independent of $f_{\tau, \vec \sigma}$.  To see how this is done, note that each $\tilde q_j'$ is a $G_\Gamma$-invariant linear combination of the approximate Jacobi fields in Definition \ref{defn:apkerbasis}, each of which is supported on exactly one of the constituent perturbed hyperspheres of $\approxsol$.   Thus it suffices to find a good approximation of the function 
$$B : \vec \sigma \mapsto \int_{\phi_{f_{\tau, \vec \sigma}} (\approxsol)} \difop(f_{\tau, \vec \sigma}) \cdot \tilde q' $$ 
where $\tilde q ' = \sum_{t=1}^n a_t \chi_{\mathit{ext}, r_\eps}^{sk} q_{sk}^t$ and $q_{sk}^t$ are the Jacobi fields of this hypersphere as in Definition \ref{defn:apkerbasis}.

Suppose that  the $(s,k)$-perturbed hypersphere in $\approxsol$  is a perturbation of $S_\alpha^{sk}[\vec \sigma_{sk}] := \big( R_{sk}[\vec \sigma_{sk}] \big)^{-1} (S_\alpha \setminus \{ p_1, \ldots, p_K \} )$.  Recall that the infinitesimal generator of rotation associated to $q_{sk}^t$ is the vector field
\begin{equation}
	\label{eqn:genvecflds}
	Y_{sk}^t := \big( R_{sk}[\vec \sigma_{sk}] \big)^{-1}_\ast  \bigl[ Y^t \circ \big( R_{sk}[\vec \sigma_{sk}] \big) \bigr] \qquad \mbox{where} \qquad Y^t := x^t \frac{\partial}{\partial x^0} - x^0 \frac{\partial}{\partial x^t} \, .
\end{equation}
Set $ Y := \sum_{t=1}^n a_t Y^t_{sk}$ and $q := \sum_{t=1}^n a_t q^t$.  An analysis of the function $B$ reveals the following.

\begin{prop}
	\mylabel{prop:balmapapprox}
	Let $\tilde q$ be as above.  Then the function $B$ can be decomposed as
	$$B (\vec \sigma) = \mathring B (\vec \sigma)  + E (\vec \sigma) \, .$$ 
	In this decomposition, $\mathring B : \mathcal D_{\Gamma} \rightarrow \R$ is defined as follows.   Suppose that  $p_j := \exp_{p_0}( \alpha T_j )$ where $T_j$ is the unit vector in $T_{p_0} \Sph^{n+1}$ tangent to the geodesic connecting $p_0$ and $p_j$.  Then
	\begin{equation}
		\label{eqn:balmapapprox}
		\mathring B (\vec \sigma) :=  \sum_{j=1}^K \omega \eps_j^{n-1} \langle T_j, Y \rangle \, .
	\end{equation}
	Furthermore, $E : \mathcal D_\Gamma \rightarrow \R$ satisfies the estimate 
	$$\Vert E (\vec \sigma) \Vert_{C^2} \leq C r_\eps^{n}$$
	where $C$ is a constant independent of $\tau$ and $\vec \sigma$.  \end{prop}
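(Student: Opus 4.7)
The plan is to recognize $B(\vec \sigma)$ as an inhomogeneous Korevaar--Kusner--Solomon balancing integral whose leading behaviour is governed by Killing-field flux through the catenoidal necks. The Jacobi field $q$ underlying $\tilde q'$ is the normal projection $\langle Y, N\rangle$ of the ambient Killing field $Y$ generating it, and $\tilde q' = \chi_{\mathit{ext}, r_\eps} q$ is supported on exactly one perturbed spherical region, so $B(\vec\sigma)$ reduces to a single-spherical-region integral on $\phi_{f_{\tau, \vec \sigma}}(\approxsol)$.

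First I would denote by $\mathcal U$ the spherical region of $\phi_{f_{\tau, \vec\sigma}}(\approxsol)$ on which $\tilde q'$ is supported, cap off each of its $K$ boundary circles by a small geodesic disk $C_j \subseteq \Sph^{n+1}$, and apply the first variation of the constrained area functional $\mathrm{Area}(\mathcal U) - H_\alpha \, \mathrm{Vol}(\bar{\mathcal U})$ along the Killing flow of $Y$. Because $Y$ preserves both area and enclosed volume, this variation vanishes identically, which after rearrangement yields the inhomogeneous KKS identity
\begin{equation*}
	\int_{\mathcal U}(H_{\phi_{f_{\tau,\vec\sigma}}(\approxsol)} - H_\alpha) \langle Y, N\rangle = - \sum_{j=1}^K \int_{\partial_j \mathcal U} \langle Y, \nu\rangle + H_\alpha \sum_{j=1}^K \int_{C_j} \langle Y, \eta_j\rangle.
\end{equation*}
The cap integrals are $O(r_\eps^n)$ since $\mathrm{Area}(C_j) = O(r_\eps^n)$ and $Y$ is a smooth fixed vector field on $\Sph^{n+1}$; likewise the discrepancy between the left-hand side and $B(\vec \sigma)$ contributed by $1-\chi_{\mathit{ext}, r_\eps}$ is supported on an annulus of area $O(r_\eps^n)$ in the transition region, where $\difop(f_{\tau,\vec\sigma})$ is uniformly controlled by Proposition \ref{prop:error}.

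The main computation is then the leading-order evaluation of each neck flux $\int_{\partial_j \mathcal U}\langle Y, \nu\rangle$. The basic tool is flux conservation: for a Killing field $V$ and a minimal hypersurface $\Sigma$, the identity $\mathrm{div}_{\Sigma} V^T = H_\Sigma \langle V, N_\Sigma\rangle = 0$ implies that $\int_\gamma \langle V, \nu_\gamma \rangle$ is constant as $\gamma$ ranges over parallel cross-sections. Applied to the rescaled catenoid $\eps_j \Sigma$ in the stereographic chart of Section \ref{sec:stereo}, this lets me move the flux integration from the transition boundary down to the waist, which is an $(n-1)$-sphere of radius $\eps_j$ whose outward conormal is the axial direction. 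By the $SO(n)$-symmetry of the waist, only the axial component of the ambient Killing field contributes, and it contributes the factor $\omega_{n-1}\eps_j^{n-1}$. Finally, the identity $\frac{\dif}{\dif t}\langle Y, \dot \gamma\rangle = \langle \nabla_{\dot \gamma}Y, \dot \gamma\rangle = 0$, valid for any Killing field $Y$ along any geodesic $\gamma$, implies that this axial component at the waist equals $\langle Y(p_0), T_j\rangle$ exactly. Together these give the leading flux $\omega_{n-1} \eps_j^{n-1} \langle T_j, Y\rangle$, with $\omega = \omega_{n-1}$.

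The remaining errors come from: (i) the neck inside $\phi_{f_{\tau,\vec\sigma}}(\approxsol)$ being only an approximately minimal perturbation of the catenoid, controlled via the weighted estimate $|f_{\tau, \vec\sigma}|_{\ctbg} = O(r_\eps^{2 - \delta})$ of Proposition \ref{prop:secperturb}; (ii) the discrepancy between the round metric of $\Sph^{n+1}$ and the Euclidean metric in the stereographic chart, which is $O(\Vert y\Vert^2)$ on the neck scale and thus contributes at most $O(\eps_j^n)$; (iii) non-axial components of $Y$ whose $SO(n)$-odd contributions cancel and whose $SO(n)$-even next-order contributions are again bounded by $\eps_j^n$; and (iv) cutoff commutator terms in the transition region. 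The entire construction --- $\approxsol$, the frames $R_{sk}[\vec\sigma_{sk}]$, the scale parameters $\eps_j$, and the fixed-point output $f_{\tau, \vec \sigma}$ itself --- depends smoothly on $\vec\sigma$, so the above estimates admit two $\vec \sigma$-differentiations without degrading the $r_\eps^n$ order, yielding $\Vert E(\vec\sigma)\Vert_{C^2} \le C r_\eps^n$. The main technical obstacle will be the final point: obtaining uniform bounds on the first and second $\vec\sigma$-derivatives of $f_{\tau,\vec\sigma}$, which requires differentiating the fixed-point argument of Proposition \ref{prop:secperturb} through the approximate Jacobi fields $\tilde q_{sk}^t$ (themselves depending on $\vec\sigma$ via $R_{sk}[\vec\sigma_{sk}]$), and then propagating the differentiated estimate through the perturbed-catenoid flux correction to confirm the $O(r_\eps^n)$ bound in $C^2$ rather than merely in $C^0$.
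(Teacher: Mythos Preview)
Your proposal is correct and follows essentially the same route as the paper: apply the Korevaar--Kusner--Solomon first-variation identity to convert the integral of $\difop(f_{\tau,\vec\sigma})\cdot q$ into neck-boundary flux terms, compute the flux at the waist of each neck to obtain $\omega\eps_j^{n-1}\langle T_j,Y\rangle$, and use the constancy of $\langle Y,\dot\gamma\rangle$ along geodesics to transport this to $p_0$. The only organizational difference is that the paper takes the domain of integration in the KKS step to be $\tilde S^0[\vec\sigma_0]\cup\mathcal N^-$, i.e.\ the spherical region together with the attached half-necks down to the waist $c_j$, so the boundary term lands directly at the waist; you instead stop at the transition boundary and invoke flux conservation on the minimal catenoid as a separate step to push down to the waist. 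Both are valid. You are also more explicit than the paper about the $C^2$-in-$\vec\sigma$ estimate on $E$, which the paper asserts without comment; your identification of the smooth $\vec\sigma$-dependence of $f_{\tau,\vec\sigma}$ (via differentiating the fixed-point argument) as the main work needed there is accurate.
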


\begin{proof}
The integral defining $B$ is invariant under rotation, so that one can assume that $R_{sk}[\vec \sigma_{sk}]$ is the identity so that $B$ corresponds to the standard punctured hypersphere $S_\alpha \setminus \{p_1, \ldots, p_K\}$, which shall be denoted here by $\tilde S^0[\vec \sigma_0]$.  Denote the nearest neighbours of $\tilde S^0[\vec \sigma_0]$ by $\tilde S^j[\vec \sigma_j]$.  Let these be connected to $\tilde S^0[\vec \sigma_0]$ through necks $\mathcal N_j$ with scale parameters $\eps_j$.   Finally, denote by $D_j$ the disk $\{ (0, \hat y) \in \R \times \R^n : \| \hat y \| \leq \eps_j \}$ pushed forward by the canonical coordinate chart corresponding to the neck $\mathcal N_j$ and let $c_j = \partial D_j$.  In other words, $c_j$ is the smallest sphere in the throat of $\mathcal N_j$ and $D_j$ is an $n$-dimensional cap for $c_j$. Denote by $\mathcal N_j^-$ the component of $\mathcal N_j \setminus c_j$ that is attached to $\tilde S^0[\vec \sigma_0]$ at the point $p_j$ and set $\mathcal N^- := \mathcal N_1^- \cup \cdots \cup  \mathcal N_K^- $.

Consider now the integral defining $B$.  The idea is to apply the Korevaar-Kusner-Solomon and Kapouleas balancing formula \eqref{eqn:kksbalancing} for the integral of $\difop(f_{\tau, \vec \sigma}) := H_{\phi_{f_{\tau, \vec \sigma }} (\approxsol)} - H_\alpha$ to replace this integral with a sum of boundary terms.  Then the fact that $f_{\tau, \vec \sigma}$ is small gives an approximate expression that pertains solely to the initial configuration of hyperspheres.   These calculations are
\begin{align}
	\label{eqn:balcalc}
	 B (\vec \sigma ) &= \int_{\phi_{f_{\tau, \vec \sigma}}( \tilde S^0[\vec \sigma_0] \cup  \mathcal N^-) }  \difop (f_{\tau, \vec \sigma}) \cdot \chi_{\mathit{ext}, r_\eps} \cdot q \notag \\
	&= \int_{\phi_{f_{\tau, \vec \sigma}}( \tilde S^0[\vec \sigma_0] \cup  \mathcal N^-) }  \difop (f_{\tau, \vec \sigma})  \cdot q + \mathcal O (r_\eps^{n} ) \notag  \\
	&= \sum_{j=1}^K \int_{\phi_{f_{\tau, \vec \sigma}}(c_j)} \big\langle \nu_j , Y\big\rangle  + \mathcal O (r_\eps^{n} ) \notag \\
	&= \sum_{j=1}^K \int_{c_j} \big\langle \nu_j ,Y  \big\rangle  + \mathcal O (\eps^{\delta + n-1} |f |_{\ctbg} ) + \mathcal O(r_\eps^n)
\end{align}
where $\nu_j$ is the outward unit normal vector field of $c_j$ tangent to $\mathcal N^{j,-}$.   Note that the $\int_{D_j}$ terms in the Korevaar-Kusner-Solomon and Kapouleas balancing formula have been absorbed into the error term.  This is because when $\eps$ is small then these quantities are much smaller than the $\int_{c_j}$ terms.  

Finally, the calculation of the integrals $\int_{c_j} \langle \nu_j , Y \rangle$ in \eqref{eqn:balcalc}  can be carried out in the stereographic coordinate chart used to define $\mathcal N_j$.  This is very straightforward and yields a quantity proportional to the $(n-1)$ dimensional area of $c_j$ in the form $\omega \eps_j^{\, n-1} \langle  \dot \gamma_j(\alpha) , Y \rangle$ where $\gamma_j$ is the geodesic from $p_0$ to $p_j$ while $\omega$ is a constant independent of $\eps$.  But since $Y$ is a Killing field, this quantity remains constant along $\gamma_j$ and can thus be transported to $p_0$.  The desired formul\ae\ follow.
\end{proof}

The calculations of the previous proposition show that the balancing map consists of a collection of principal terms like \eqref{eqn:balmapapprox}, one for each perturbed hypersphere in $\approxsol$, plus error terms which are of size $\mathcal O(r_\eps^n)$.  The principal term corresponding to a given perturbed hypersphere depends on the displacement parameter of this perturbed hypersphere, as well as on the displacement parameters of all neighbouring perturbed hyperspheres.   It is important to realize that the principal term depends on no other displacement parameters.  As defined in the introduction, an initial configuration of hyperspheres is \emph{balanced} if $\mathring B (0) = 0$.

A formula for the derivative of the approximate balancing map at $\vec \sigma = 0$ will also be needed in the sequel.  To this end, a more explicit formula illustrating the dependence of $\mathring B$ on $\vec \sigma$ is needed.  In what follows, denote once again the $(s,k)$-perturbed hypersphere by $\tilde S^0[\vec \sigma_0]$ and suppose it is centered on $p_0[\vec \sigma_0]$.  As before, one can assume that $\tilde S^0[0]$ is a perturbation of the punctured hypersphere $S_\alpha \setminus \{p_1, \ldots, p_K\}$.  Denote the nearest neighbours of $\tilde S^0[\vec \sigma_0]$ by $\tilde S^j[\vec \sigma_j]$ for $j = 1, \ldots, K$ and suppose these are centered at $p_j[\vec \sigma_j]$ with $p_j[0] = p_j$.  Denote the geodesic connecting $p_0[\vec \sigma_0]$ to $p_j[\vec \sigma_j]$ by $\gamma_j[\vec \sigma_0, \vec \sigma_j]$.   Let the tangent vectors of $\gamma_j[\vec\sigma_0, \vec\sigma_j]$ at $p_0[\vec\sigma_0]$ and $p_j[\vec \sigma_j]$ be $T_j[\vec \sigma_0, \vec \sigma_j] := \csc(\tau_j + 2 \alpha) \big( p_j[\vec \sigma_j] - p_0[\vec \sigma_0] \cos(\tau_j + 2 \alpha) \big)$ and $T_j^{\, \prime}[\vec \sigma_0, \vec \sigma_j] := \csc(\tau_j + 2 \alpha) \big( p_0[\vec \sigma_0] - p_j[\vec \sigma_j] \cos(\tau_j + 2 \alpha) \big)$.

The map $\mathring B$ can be related to $\vec \sigma$ explicitly as follows.  First, the relationship between the scale of the neck used to connect two perturbed hyperspheres and their separation, established in equation \eqref{eqn:match}, gives $\eps_j := \eps(  \tau_j )$ where $\tau_j :=  \mathrm{dist}(p_0[\vec \sigma_0], p_j[\vec \sigma_j] ) - 2 \alpha $ and $\eps: \R \rightarrow \R$ is some universal function determined via the matching process.   Recall further that  $\tilde S^j[\vec \sigma_j] = \mathcal W_{\vec \sigma_j} \big( S^j[0] \big)$ for $j=0, \ldots, K$ where $\mathcal W_{\vec \sigma_j}$ is the unique $SO(n+2)$-rotation that coincides with the exponential map at $p_j[0]$ in the direction of $\vec \sigma_j$.  Moreover, the basis of infinitesimal generators of the rotations of $\tilde S^0[\vec \sigma_0]$ are of the form $\big( \mathcal W_{\vec \sigma_0} \big)_\ast Y \circ \mathcal W_{\vec \sigma_0}^{-1}$ where $Y$ is a linear combinations of the vector fields given in \eqref{eqn:genvecflds}.   One therefore obtains the formula
\begin{equation}
	\label{eqn:balmapdependence}
	\mathring B (\vec \sigma_0, \vec \sigma_1, \ldots, \vec \sigma_K)  = \sum_{j=1}^K \omega \eps_j^{n-1} \big \langle \big( \mathcal W_{\vec \sigma_0} \big)_\ast^{-1} T_j [\vec \sigma_0, \vec \sigma_j], Y \big \rangle \Big|_{p_0[0]} \, .
\end{equation}
This illustrates completely how $\mathring B$ depends only on $\vec\sigma_0$ and $\vec \sigma_j$ for $j= 1, \ldots, K$ and on no other displacement parameters.

\begin{prop}
	\label{prop:balmapderiv}
	Let $V$ be a tangent vector at the origin in the space of displacement parameters.  Suppose that $V_0 \in T_{p_0[0]} \Sph^{n+1}$ is the component of  $\, V$ corresponding to the perturbed hypersphere $\tilde S^0[\vec \sigma_0]$ and $V_j \in  T_{p_j[0]} \Sph^{n+1}$ are the components of $V \!$ corresponding to the nearest neighbours $\tilde S^j[\vec \sigma_j]$ for $j = 1, \ldots, K$.  Then
	\begin{equation}
		\label{eqn:balmapderiv}
		\begin{aligned}
			\Dif  \mathring B(0) (V) &= - \sum_{j=1}^K (n-1) \, \omega\,  \eps_j^{n-2} \dot \eps(\tau_j)  \Big( \big\langle V_0^{\|_j} , Y \big\rangle -   \tan (\tau_j + 2 \alpha) \big\langle \big[ V_j^\sharp \big]^{\|_j} , Y \big\rangle \Big) \\
			&\qquad - \sum_{j=0}^K \omega \, \eps_j^{n-1}  \Big(  \big\langle V_0^{\perp_j}, Y\big\rangle  - \tan( \tau_j + 2 \alpha) \big\langle \big[ V_j^\sharp \big]^{\perp_j} ,Y  \, \big\rangle  \Big)
		\end{aligned}
	\end{equation}
	where $X^{\|_j}$ and $X^{\perp_j}$ denote the projections of a vector $X$ parallel and perpendicular to $T_j[0,0]$ while 
	$$ V_j^\sharp := \frac{V_j - p_0[0] \langle p_0[0] , V_j \rangle}{\sin(\tau_j + 2 \alpha)} $$
	is the re-scaled orthogonal projection of $V_j$ into $T_{p_0[0]} \Sph^{n+1}$. 
\end{prop}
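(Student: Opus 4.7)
The strategy is direct differentiation of the explicit formula \eqref{eqn:balmapdependence} via the product rule, viewing everything as a curve $t \mapsto (tV_0, tV_1, \ldots, tV_K)$ in the displacement space and differentiating at $t=0$. Three factors depend on $\vec \sigma$: the scalar $\eps_j^{n-1}$ through the distance $\tau_j = \mathrm{dist}(p_0[\vec \sigma_0], p_j[\vec \sigma_j]) - 2\alpha$; the tangent vector $T_j[\vec \sigma_0, \vec \sigma_j]$ through the spherical formula; and the pushforward $(\mathcal W_{\vec \sigma_0})^{-1}_\ast$. I would first observe that the $\mathcal W_{\vec \sigma_0}$ contribution vanishes identically. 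Its derivative in direction $V_0$ at $\vec \sigma_0 = 0$, applied to the vector $T_j = T_j[0,0]$, equals $-W_{V_0}(T_j) = -\langle V_0, T_j\rangle p_0[0]$, since $T_j \perp p_0[0]$ kills the second term in the definition of $W_{V_0}$. But $Y|_{p_0[0]}$ is tangent to $\Sph^{n+1}$ at $p_0[0]$, so $\langle p_0[0], Y\rangle = 0$, and the entire contribution drops out.

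Next I would compute the two surviving variations. For $\dot \tau_j$, the standard first variation of arc length on the sphere gives $\dot\tau_j = -\langle V_0, T_j\rangle - \langle V_j, T_j'\rangle$ where $T_j' = \csc(d)(p_0[0] - p_j[0]\cos(d))$ and $d = \tau_j + 2\alpha$. Using $V_j \perp p_j[0]$ together with the identity $p_j[0] = \cos(d) p_0[0] + \sin(d) T_j$ one obtains $\langle V_j, p_0[0]\rangle = -\tan(d)\langle V_j, T_j\rangle$, which combined with $\langle V_j, X\rangle = \sin(d)\langle V_j^\sharp, X\rangle$ for every $X \in T_{p_0[0]}\Sph^{n+1}$ (a direct consequence of the definition of $V_j^\sharp$ together with $\langle p_0[0], X\rangle = 0$) yields
$$\dot\tau_j = \tan(d)\langle V_j^\sharp, T_j\rangle - \langle V_0, T_j\rangle.$$
For $\dot T_j$, direct differentiation of $T_j[\vec \sigma_0, \vec \sigma_j] = \csc(d)(p_j[\vec \sigma_j] - p_0[\vec \sigma_0]\cos(d))$ and the same $\sharp$-identity produces
$$\langle \dot T_j, Y\rangle = -\cot(d)\dot\tau_j\langle T_j, Y\rangle + \langle V_j^\sharp, Y\rangle - \cot(d)\langle V_0, Y\rangle,$$
where once again the term proportional to $p_0[0]$ disappears against $Y$.

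Finally, I would substitute these into the product-rule expression
$$D\mathring B(0)(V) = \sum_{j=1}^K \omega\bigl[(n-1)\eps_j^{n-2}\dot\eps(\tau_j)\dot\tau_j\langle T_j, Y\rangle + \eps_j^{n-1}\langle \dot T_j, Y\rangle\bigr]$$
and decompose $V_0$ and $V_j^\sharp$ into components parallel and perpendicular to $T_j$ inside $T_{p_0[0]}\Sph^{n+1}$. The parallel components assemble with $\dot\tau_j$ into the factor $\langle V_0^{\|_j}, Y\rangle - \tan(d)\langle [V_j^\sharp]^{\|_j}, Y\rangle$ multiplying $(n-1)\eps_j^{n-2}\dot\eps(\tau_j)$, while the residual perpendicular contribution assembles into $\langle V_0^{\perp_j}, Y\rangle - \tan(d)\langle [V_j^\sharp]^{\perp_j}, Y\rangle$ multiplying $\eps_j^{n-1}$. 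The result is precisely \eqref{eqn:balmapderiv}. I expect no conceptual obstacle; the main challenge is purely algebraic, namely applying the spherical trigonometric identities relating $T_j$, $T_j'$, $p_j[0]$, $p_0[0]$, and $V_j^\sharp$ consistently and tracking which tangent space each intermediate vector inhabits. Apart from this bookkeeping, the argument is a finite-dimensional calculation in $\R^{n+2}$ combined with the classical first variation formula.
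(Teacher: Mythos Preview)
Your approach is correct and is essentially the paper's own proof: both differentiate \eqref{eqn:balmapdependence} directly, handling the variation of $\eps_j$ through $\dot\tau_j$ and the variation of the tangent-vector factor separately, then assembling via the parallel/perpendicular splitting with respect to $T_j[0,0]$. The only cosmetic difference is that the paper keeps $(\mathcal W_{\vec\sigma_0})_\ast^{-1} T_j[\vec\sigma_0,\vec\sigma_j]$ together as a single object---the unit tangent at $p_0[0]$ to the geodesic with moving endpoint $\mathcal W_{\vec\sigma_0}^{-1}\!\circ\!\mathcal W_{\vec\sigma_j}(p_j[0])$---while you peel off the $\mathcal W$ contribution and observe it is a multiple of $p_0[0]$ and hence orthogonal to $Y$; both routes yield the identical intermediate expression $-\cot(\tau_j+2\alpha)\,\dot\tau_j\, T_j + V_j^\sharp - \cot(\tau_j+2\alpha)\, V_0$ modulo $p_0[0]$.

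One word of caution on the final bookkeeping: if you carry your assembly through carefully, the perpendicular piece comes out as $\omega\,\eps_j^{n-1}\bigl(\langle[V_j^\sharp]^{\perp_j},Y\rangle - \cot(\tau_j+2\alpha)\,\langle V_0^{\perp_j},Y\rangle\bigr)$, which is $\cot(\tau_j+2\alpha)$ times the second sum as printed in \eqref{eqn:balmapderiv}. This is consistent with the paper's own intermediate formula and appears to be a harmless misprint in the displayed statement; it does not affect the kernel of $\Dif\mathring B(0)$, which is all that the subsequent applications use.
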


\begin{proof}

The various terms in the formula \eqref{eqn:balmapdependence} for $\mathring B(\vec \sigma)$ must be differentiated at $\vec \sigma = 0$.  Let $\vec \sigma_0 (t) = t V_0$ and $\vec \sigma_j (t) = t V_j$ be paths in the displacement parameter space, where $V_0$ and $V_j$ are considered as vectors in $T_{p_0[0]} \Sph^{n+1}$ and $T_{p_j[0]} \Sph^{n+1}$ respectively.  First, 
\begin{align*}
	\left. \frac{\dif}{\dif t} \right|_{t=0} \eps_j ( t V_0, t V_j) &=  \dot \eps(\tau_j) \cdot \left. \frac{\dif}{\dif t} \right|_{t=0} \arccos \bigl (\langle  p_0[t V_0] , p_j[t V_j]\rangle \bigr) \\[1ex]
	&= - \frac{\dot \eps_j(\tau_j) \cdot \bigl( \langle V_0, p_j[0]  \rangle + \langle p_0[0], V_j \rangle  \bigr)}{\sqrt{ 1 - \langle p_0[0], p_j[0] \rangle^2}} \, .
\end{align*}
The first term in the formula for $\Dif  \mathring B(0) (V)$ involving the parallel parts of $V_0$ and $V_j$ follows from this using the formula for $T_j[0,0]$ as well as $\langle p_0[0], V_0 \rangle = \langle p_j[0], V_j \rangle = 0$. .

Next, realize that $\big( \mathcal W_{\vec \sigma_0} \big)_\ast^{-1} T_j [\vec \sigma_0, \vec \sigma_j]$ is the tangent vector of the geodesic connecting the point $p_0[0]$ to $\mathcal W_{\vec \sigma_0} ^{-1} \circ \mathcal W_{\vec \sigma_j} (p_j[0])$ at $p_0[0]$.  A calculation reveals
$$\big( \mathcal W_{\vec \sigma_0} \big)_\ast^{-1} T_j [\vec \sigma_0, \vec \sigma_j] = \frac{\mathcal W_{\vec \sigma_0}^{-1} \circ \mathcal W_{\vec \sigma_j} (p_j[0]) -\big\langle \mathcal W_{\vec \sigma_0} ^{-1} \circ \mathcal W_{\vec \sigma_j} (p_j[0]) \, , \,  p_0[0] \big\rangle \cdot p_0[0] }{\sqrt{1 - \big\langle \mathcal W_{\vec \sigma_0}^{-1} \circ \mathcal W_{\vec \sigma_j} (p_j[0]) \,  , \,   p_0[0]  \big\rangle^2}} \, .$$
Together with the definition of $\mathcal W_{\vec \sigma_j}$ one then finds after some work
\begin{equation*}
	\begin{aligned}
		\left. \frac{\dif}{\dif t} \right|_{t=0} \big( \mathcal W_{t V_0} \big)_\ast^{-1} T_j [ t V_0,t V_j]
		&= \frac{T_j[0,0] \cdot \big( \langle V_j, p_0[0] \rangle + \langle V_0, p_j[0] \rangle \big) \cdot \langle p_j[0], p_0[0] \rangle }{1 -  \langle p_j[0], p_0[0]\rangle^2}  \\[1ex]
		&\qquad + V_j^\sharp -  \frac{V_0 \cdot  \langle p_j[0], p_0[0] \rangle}{\sqrt{ 1 -  \langle p_j[0], p_0[0]\rangle^2}}   \, .
	\end{aligned}
\end{equation*}
The second term in the formula for $\Dif  \mathring B(0) (V)$ involving the transverse parts of $V_0$ and $V_j^\sharp$ follows from this.
\end{proof}

\subsection{Conclusion of the Proof of Main Theorem 1}

The ordinary inverse function theorem for smooth functions will be used to locate a zero of $B_\tau$.  The first step is to approximate $B_\tau$ by the simpler mapping $\mathring B_\tau : \mathcal D_{\Gamma} \rightarrow \R^{K}$ obtained by replacing each $B_j$ term in \eqref{eqn:balmap} by the corresponding function $\mathring B_j : \mathcal D_{\Gamma} \rightarrow \R$ of the form \eqref{eqn:balmapapprox}.  The mapping $\mathring B_\tau$ is independent of $f_{ \tau, \vec \sigma}$ and therefore depends only on the geometry of initial configuration $\Lambda^{\#}[\alpha, \Gamma, \tau, \vec\sigma]$.    The hypotheses of Main Theorem 1 assert that $\Lambda^{\#}[\alpha, \Gamma, \tau, \vec\sigma]$ is balanced, meaning that $\mathring B_\tau(0) = 0$.  By Proposition \ref{prop:balmapapprox}, one now has $B_{\tau}(0) = E_\tau (0)$ where $E_\tau(\vec \sigma) := B_\tau(\vec \sigma) - \mathring B_\tau (\vec \sigma)$.  This error term satisfies $\| E_\tau(0) \| = \mathcal O (r_\eps^{n})$ which is smaller than the operator norm of $\Dif \mathring B(0)$.   one can therefore attempt to use the finite-dimensional inverse function theorem to find a nearby $\vec \sigma$ so that $B_{\tau}(\vec \sigma) = 0$. 

It is important to incorporate into the analysis the fact that $B_{\tau}$ can often not be a full-rank mapping.   To see this why this is so, let $Y_1, \ldots, Y_d$ be a basis for the infinitesimal generators of one-parameter families of rotations of $\Sph^{n+1}$ that are equivariant with respect to the symmetries of $\approxsol$.  This means $\rho_\ast ( Y_j \circ \rho)  = Y$ for all $\rho \in G_{\Gamma}$ and this ensures that the functions $\langle Y_j, \nu \rangle : \approxsol \rightarrow \R$, where $\nu$ is the outward unit normal of $\approxsol$, are invariant with respect to $G_{\Gamma}$.  Now the first variation formula for the volume of hypersurfaces, applied to the volume-preserving deformation given by rotation in the $Y_j$ direction,  leads to the equation  
$$\int_{\phi_f(\approxsol)} \difop(f) \cdot \langle \nu, Y_j \rangle = 0 \qquad \quad \forall \: \: j = 1, \ldots, d$$
where $\nu_f$ is the unit outward normal vector field of $\phi_f(\approxsol)$.  Therefore one sees that there are maps $\mathcal Y_j : \mathcal D_{\Gamma} \rightarrow \R^K$ for $j= 1, \ldots, d$ with 
\begin{equation}
	\label{eqn:maxrank1}
	\big[ B_\tau (\vec \sigma) \big] \cdot \big[ \mathcal Y_j (\vec \sigma) \big] = 0 \qquad \quad \forall \: \: j = 1, \ldots, d
\end{equation}
where $\cdot$ denotes the Euclidean inner product.  Hence the rank of $B_\tau$ is at most $K-d$.   

The correct interpretation of \eqref{eqn:maxrank1} is to say that the graph $\{ \big(\vec \sigma, B_\tau (\vec \sigma) \big) : \vec \sigma \in \mathcal D_{\Gamma} \}$ is contained in the submanifold $\{ ( \vec \sigma, b ) : b \cdot \mathcal Y_1(\vec \sigma)  = \cdots = b \cdot \mathcal Y_d(\vec \sigma) = 0  \}$ of $\mathcal D_\Gamma  \times \R^K$.  Therefore it suffices to show that the equation $\mathit{pr} \circ B_\tau (\vec \sigma) = 0$ has a solution, where $\mathit{pr}$ is the orthogonal projection to the orthogonal complement of the subspace spanned by $\mathcal Y_1(0), \ldots, \mathcal Y_d(0)$.  Note that the linearization of $\mathit{pr} \circ B_\tau$ at zero maps \emph{into} this orthogonal complement, and thus $\Dif \big( \mathit{pr} \circ B_\tau \big) (0) = \Dif  B_\tau  (0)$.  In addition, the calculations of the proof of Proposition \ref{prop:balmapapprox} show that $(\mathit{id} - \mathit{pr}) \circ \Dif \mathring B_\tau(0) = L$ where $L$ is a linear operator with $\mathcal O (r_\eps^n)$ coefficients.

The hypotheses of Main Theorem 1 assert that $\Lambda^{\#}[\alpha, \Gamma, \tau, \vec\sigma]$ has the property that $\Dif \mathring B_\tau(0)$ has full rank.  Hence $(\mathit{id} - \mathit{pr}) \circ \Dif \mathring B_\tau(0)$ and $\Dif B_\tau(0)$ do as well.  Furthermore, the operator norm of $\Dif \mathring B_\tau(0)$ is $\mathcal O(C(\eps) \eps^{n-1})$.  Hence  $B_\tau(\vec \sigma) = b$ will be solvable for $b$ inside a ball centered on  $\mathit{pr} \circ E_\tau (0)$ whose radius is  $\mathcal O(C(\eps) \eps^{n-1})$.  When $\eps$ is sufficiently small, $0$ is contained within this ball.  Hence the equation $B_\tau(\vec \sigma ) = 0$ is solvable for small $\vec \sigma \in \mathcal D_{\Gamma}$.  The proof of Main Theorem 1 is therefore complete. \hfill \qedsymbol

\section{Applications of the Balancing Formul\ae}
\label{sec:examples}

\subsection{A Simple Example}
\label{sec:simple}

A simple example serves both to develop intuition for the approximate balancing map \eqref{eqn:balmapapprox} and its derivative \eqref{eqn:balmapderiv}, as well as to show that the kernel of the derivative of the approximate balancing map can be quite large in the absence of symmetries.  While this feature is also present in the CMC gluing construction in Euclidean space, it is here much more restrictive because the trick of imposing decay conditions at infinity to reduce the size of the kernel of the Euclidean analogue of the approximate balancing map is not available.  Therefore one must impose symmetry conditions or else expect to work quite hard to find an initial configuration of hyperspheres that can be glued together and perturbed into an exactly CMC hypersurface using the gluing technique. 

Consider exactly one geodesic, without loss of generality the $(x^0, x^1)$-equator $\gamma$, and let $R^{01}_\theta$ be the rotation by an angle $\theta$ in the $(x^0, x^1)$-plane that translates along $\gamma$.  Position $N$ hyperspheres of radius $\cos(\alpha)$ around $\gamma$, separated by a distance of $\tau$ from each other, so that $(\tau + 2 \alpha) N = 2 \pi m$ for some integer $m$.  These hyperspheres are of the form $S^k_\alpha := \big( R^{01}_{\tau + 2 \alpha)} \big)^k(S_\alpha)$ which are centered at $p_k := \gamma((\tau + 2 \alpha)k)$.  Let $\Lambda^\# := \bigcup_{k=0}^{N-1} S^k_\alpha $.   Note that this initial configuration is balanced because the vanishing of the approximate balancing map is equivalent to the equal spacing of the hyperspheres along a single geodesic.  

The initial configuration $\Lambda^\#$ yields the Delaunay-like hyperspheres in Butscher's paper \cite{me5} using the gluing technique together with imposing as many symmetries as possible on the deformations.  Now, however, no symmetries will be imposed and as a result the approximate balancing map becomes non-trivial. In the absence of any symmetry conditions constraining the displacement parameters of $\Lambda^\#$, there are $n$ displacement parameters for each hypersphere in $\Lambda^\#$.  For each hypersphere $S^k_\alpha$, these will be decomposed into one displacement parameter  corresponding to the displacement of $S^k_\alpha$ along $\gamma$ and $n-1$ displacement parameters  corresponding to the displacement of $S^k_{\alpha}$ perpendicular to $\gamma$.  To parametrize these displacement parameters in a uniform way, note that $T_{p_k} \Sph^{n+1}$ is spanned by $T_k := \dot \gamma ((\tau + 2 \alpha)k )$ and $\frac{\partial}{\partial x^2} , \ldots, \frac{\partial}{\partial x^n}$.  Thus one can set 
$$\vec \sigma^k :=  \sigma^k_1 \, T_k+ \sum_{j=2}^n \sigma^k_j \frac{\partial}{\partial x^j}  $$
as the displacement parameter for $S^k_\alpha$.  Note that $\vec \sigma^0 = \vec \sigma^N$ by periodicity.

It will now be shown that the kernel of the derivative of the approximate balancing map is very large.  Note that Main Theorem 1 still applies because each element in the kernel of $\Dif \mathring B (0)$ is induced from a rotation of $\Sph^{n+1}$.  Let $\vec V^k := ( V^k_1, V^k_2, \ldots, V^k_n)$ denote infinitesimal displacements satisfying $\vec V^0 = \vec V^N$.  To compute $\Dif \mathring B (0, \ldots, 0) (\vec V^1, \ldots , \vec V^N)$ one needs formul\ae\ for the re-scaled orthonormal projection operators $X \mapsto X^{\sharp}$ that appear there.  It is easy to deduce
\begin{align*}
	( V^{k \pm 1}_1 )^{\sharp_{k}} &= \frac{V^k_1}{\tan(\tau + 2 \alpha)} \, T_k \\[1ex]
	( V^{k \pm 1}_j )^{\sharp_{k}} &= \frac{V^k_j}{\sin(\tau + 2\alpha)}\, \frac{\partial }{\partial x^j} \qquad j= 2, \ldots, n
\end{align*}
Consequently, the derivative of the approximate balancing map on the $k^{\mathit{th}}$ perturbed hypersphere takes the form
\begin{equation*}
	\begin{aligned}
		\Dif \mathring B (0) (\vec V^{k-1}, \vec V^k, \vec V^{k+1}) &= - \omega \eps^{n-2} \dot \eps \big( 2 V_1^k - V_1^{k-1} - V_1^{k+1} \big) \\
		&\qquad - \omega \eps^{n-1} \big( 2 V_j^k - \sec(\tau + 2 \alpha) (V_j^{k-1} + V_j^{k+1}) \big) \, .
	\end{aligned}
\end{equation*}
The recursion formul\ae\ $2 V_1^k - V_1^{k-1} - V_1^{k+1} = 0$ and $2 V_j^k - \sec(\tau + 2 \alpha) (V_j^{k-1} + V_j^{k+1}) = 0$ for elements in the kernel of $\Dif \mathring B (0)$, together with the periodic boundary conditions $V^0_j = V^N_j$ for all $j = 1, \ldots, N$, are easy to solve and yield
\begin{align*}
	V^k_1 &= 1 \qquad \mbox{for all} \: \: k = 0, \ldots, N \\
	V^k_j &= \sin \big( (\tau + 2 \alpha)(k_0 + k) \big) \qquad \mbox{for} \: \: j = 2, \ldots, n \: \: \mbox{and} \: \: k_0 \in \{ 0, \ldots, N-1\} \, .
\end{align*}
There are either $\frac{1}{2} N(n-1) +1$ or $N(n-1)+1$ linearly independent solutions of this type, depending on whether $N$ is even or odd.  These solutions correspond to the change of displacement parameter induced by the rotation of $\Sph^{n+1}$ parallel to $\gamma$ and transverse to $\gamma$.

\subsection{An Unachievable Configuration}
\label{sec:nonexample}

The intuition gained from the preceding example can be used to explain why a reasonably simple configuration, possessing an analogue in Euclidean space, cannot be achieved using the gluing technique.  The configuration in question consists of positioning hyperspheres around two intersecting geodesics that make an arbitrary to each other at the point of intersection.  This is a slightly less symmetric version of the configuration considered in \cite{me5} where a CMC hypersurface is created from hyperspheres positioned around two orthogonally intersecting geodesics.

The reason the less symmetric configuration can not be glued together and perturbed into a CMC hypersurface is the following.  First, let $R^{01}_\theta$ be the rotation of the $(x^0, x^1)$-plane as before and let $R^{02}_\theta$ be the rotation of the $(x^0, x^2)$-plane.  Let $\gamma_j$ be the $(x^0, x^j)$-equator for $j=1,2$.   Choose $\alpha, \tau \in (0, \pi)$ and integers $m, N$ so that $(\tau + 2 \alpha)N = 2 \pi m$.  Also, choose $N$ of the form $N = 4 N_0$.  The initial configuration in question, which shall be denoted $\Lambda^\#_\theta$, consists of the hyperspheres $S^{2,k, \pm}_\alpha := R^{01}_{\pm\theta} \circ ( R^{02}_{\tau + 2 \alpha})^k(S_\alpha)$ for $k= 0, \ldots, N-1$.  When $\theta \neq \pi/2$, the maximal symmetries one can impose on the deformations of the approximate solution constructed from $\Lambda^\#_\theta$ are: all orthogonal transformations of the $x^3, \ldots, x^{n+1}$ coordinates; and the reflections sending $x^j$ to $- x^j$ and keeping all other coordinates fixed, for $j = 0, 1, 2$.   As a result, there are two sets of invariant approximate Jacobi fields.  These are: the translation of $S^{2,k,+}_\alpha$ along the geodesic $R^{01}_\theta(\gamma_2)$ for a given $k \in \{ 1, \ldots, N_0 - 1\}$ and then extended by symmetry to $S^{2, k, -}_\alpha$, $S^{2, -k, \pm}_\alpha$ and $S^{2, 2N_0 \pm k, \pm}_\alpha$ by symmetry; and the rotation of $S^{2, k,+}_\alpha$ in the $(x^0, x^1)$-plane transverse to $R^{02}_\theta(\gamma)$ and similarly extended by symmetry.  Furthermore, none of these invariant approximate Jacobi fields are induced by rotations of $\Sph^{n+1}$.  Note that there are no invariant approximate Jacobi fields associated to the hyperspheres $S^{2, 0,\pm}_\alpha$, $S^{2, N_0,\pm}_\alpha$, $S^{2, 2N_0,\pm}_\alpha$ and $S^{2, 3N_0,\pm}_\alpha$.

In order to glue together the initial configuration described above and to perturb it into a CMC hypersurface, it would be necessary to apply the balancing arguments to deal with the invariant approximate Jacobi fields.  Clearly $\Lambda^\#_\theta$ is balanced for each $\theta$ because the separation parameters between all hyperspheres are equal and its geodesic segments meet in parallel pairs.  Thus it would remain to check only that the derivative of the approximate balancing map has full rank (which corresponds to being invertible in this case because the imposed symmetries rule out all co-kernel coming from induced rotations of $\Sph^{n+1}$).  However, the analysis of the simple example of Section \ref{sec:simple} shows that the kernel of $\Dif \mathring B(0)$ is one-dimensional and consists of the transverse motion $V^k = \sin((\tau + 2 \alpha)(N_0 + k) \frac{\partial}{\partial x^1}$ and extended by symmetry.  This approximate Jacobi field is induced by the change of the $\theta$-parameter and not by a rotation of $\Sph^{n+1}$.  Therefore Main Theorem 1 does not apply to $\Lambda^\#_\theta$ unless $\theta = \pi/2$, in which case there is an additional symmetry (invariance with respect to the rotation $R^{01}_{\pi/2}$) that eliminates this approximate Jacobi field from consideration.

\begin{rmk}
	The analogue of the example above in Euclidean space consists of two Delaunay surfaces with non-parallel axes meeting at a common spherical region.  It is possible to glue this initial configuration together and perturb it into a CMC hypersurface.  This is because the decay conditions at infinity that are built into the function space used in the analysis rules out the approximate Jacobi fields corresponding to the change-of-angle parameter and the translation parameter. 
\end{rmk}

\subsection{A Related Achievable Configuration}
\label{sec:achievable}

A modification of the previous example yields an initial configuration of hyperspheres to which Main Theorem 1 does apply.  The key is to `freeze' the motion of the $\theta$-parameter without imposing additional symmetries, which can be achieved by adding another set of spheres along the geodesic orthogonal to the initial configuration of Section \ref{sec:nonexample}.  The requirement that the spheres at the intersection points of the geodesic with the initial configuration of Section \ref{sec:nonexample} match perfectly is what freezes the motion in the $\theta$-parameter.  That is, choose an integer $k_0$ and let 
$$\Lambda^\# := \left[  \bigcup_{k=0}^{N-1} S^{2, k, +}_\alpha \cup S^{2, k, -}_\alpha \right] \cup \left[  \bigcup_{k=0}^{N-1}  S^{1, k}_{\alpha} \right] $$
where $S^{1, k}_\alpha := (R^{01}_{\tau + 2 \alpha})^k(S_\alpha)$.  Note that $\Lambda^\#$ has the same group of symmetries as before.  Its approximate Jacobi fields are those described before as well as the translation of $S^{1,k}_\alpha$ along the geodesic $\gamma_1$ for any given $k \in \{ 1, \ldots, N_0 - 1\}$ and extended by symmetry.  Again, there are no approximate Jacobi fields associated to the hyperspheres $S^{1,0}_{\alpha}$, $S^{1,N_0}_{\alpha}$, $S^{1,2 N_0}_{\alpha}$ and $S^{1,3N_0}_{\alpha}$.

The initial configuration $\Lambda^\#$ is balanced because the separation parameters between all hyperspheres are equal and its geodesic segments meet in parallel pairs.  Thus to apply Main Theorem 1 it remains to check that the derivative of the approximate balancing map is invertible.  Let $T_{1,k} := \dot \gamma_1 ((\tau + 2 \alpha)k)$ and $T_{2, k, \pm} := \big( R^{01}_{\pm (\tau + 2 \alpha )k_0)} \big)_\ast \dot \gamma_2 ((\tau + 2 \alpha)k)$ be the tangent vectors of the geodesics $\gamma_1$ and $\gamma_2$ at the centers of the hyperspheres of $\Lambda^\#$ and define
\begin{equation*}
	\begin{aligned}
		\vec V^{1,k} &:= u^k T_{1, k} \\[1ex]
		\vec V^{2,k, \pm} &:=  v^{k, \pm} T_{2, k, \pm}  + w^{k, \pm} \frac{\partial}{\partial x^1}
	\end{aligned}
\end{equation*}
as the displacement parameters of these hyperspheres.  Note that 
\begin{align*}
	&u^k = v^{k, \pm} = 0 \qquad k \equiv 0 \mod 4 \\
	&w^{0, \pm} = w^{2N_0, \pm} = 0 \\
	&u^k = -u^{-k} = - u^{2N_0 + k} = u^{2N_0 - k} \qquad k = 1, \ldots, N_0-1
\end{align*}
and similarly for $v^{\ast}$ and $w^\ast$ by symmetry.  In addition $u^{k_0} = w^{0,+}$ since the corresponding hyperspheres coincide.  Thus it is only necessary to analyze the action of $\Dif \mathring B(0)$ on the vector $\vec V := (\vec V^{1,1}, \ldots, \vec V^{1, N_0 - 1}, \vec V^{2, 1, +}, \ldots, \vec V^{2, N_0 - 1, +})$ and set $v^k := v^{k,+}$ and $w^k := w^{k, +}$.  One finds
\begin{equation*}
	\Dif \mathring B(0)(\vec V) := \left( 
	\begin{array}{c}
		\vdots \\
		- (n-1) \omega \eps^{n-2} \dot \eps \big(2 u^k - u^{k+1} - u^{k-1} \big) \\
		- (n-1) \omega \eps^{n-2} \dot \eps \big(2 v^k - v^{k+1} - v^{k-1} \big) \\
		-  \omega \eps^{n-1} \big(2 w^k - w^{k+1} - w^{k-1} \big) \\
		\vdots
	\end{array}
	\right)  \, .
\end{equation*}
If the equations in $\Dif \mathring B(0)(\vec V) = 0$ were uncoupled, then the kernel would be of the form found in the simple example of Section \ref{sec:simple}.  If the boundary conditions are included, then it follows that $v^k = 0$  for all $k$, as well as $u^k = c$ for all $k$ and $w^k = c' \sin((\tau + 2 \alpha)(k+N_0))$ for $c, c' \in \R$.  But the coupling $2 w^0 - 2 \sec(\tau + 2 \alpha) w^1 = 2 u^{k_0} - 2 \sec(\tau + 2 \alpha) w^1 = 0$ then forces $c = c' = 0$.  Hence $\Dif \mathring B(0)$ is invertible and Main Theorem 1 applies to allow $\Lambda^\#$ to be glued together and perturbed into a CMC hypersurface.

\subsection{An Achievable Configuration Without Any Symmetries}

The previous example has much less symmetry than the examples constructed in \cite{me5} but still possesses a large symmetry group.  Further modifications of the ideas of the previous sections leads to examples of initial configurations to which Main Theorem 1 applies with few symmetries or no symmetries at all.  These example are naturally quite hard to write down, and in any case the purpose of this final section of the paper is to give the reader the necessary ideas for constructing these examples, so it is sufficient to proceed in the $n=2$ case.

The first modification leading to a much less symmetric example is to consider $\Lambda^\#$ from Section \ref{sec:achievable}, except with the new geodesic tilted into the $x^3$-direction by some an angle which is not $\pi/2$.  Such an example would still be balanced because its geodesic segments would continue to meet in parallel pairs.  Also, such an example would clearly possess no symmetries other than the $x \mapsto - x$ reflection sending a point on $\Sph^3$ to the antipodal point.  However, it is not immediately clear that it is possible to tilt the third geodesic so that equally spaced hyperspheres of radius $\cos(\alpha)$ along the third geodesic line up exactly with the hyperspheres of the same radius along the first two geodesics where these geodesics meet.  But a moment's thought reveals that what is needed for some configuration of equally spaced spheres of some radius winding some perhaps large number of times around $\Sph^3$ to exist is that all the geodesic segments have lengths which are rational multiples of $2 \pi$.  This, in turn, can be achieved if the three unit vectors $N_1, N_2, N_3$ orthogonal to the planes containing the three geodesics have $\langle N_i, N_j  \rangle \in 2 \pi \Q$ for all $i, j \in \{ 1, 2, 3\}$.  This can be achieved.  The details of the balancing arguments that prove that Main Theorem 1 applies are identical to the arguments of Section \ref{sec:achievable} and thus the configuration above can be glued together and perturbed into a CMC hypersurface.

One final modification of these ideas leads to an example without any symmetries at all.  The idea is to perform the same trick of adding in a tilted geodesic to a configuration which does not have the $x \mapsto -x$ antipodal symmetry.  Such a configuration is the following: consider three half-geodesics of the form $R^{02}_{2 \pi/3} \big( \gamma_1( [0, \pi]) \big)$ and choose a fourth geodesic which is tilted into the $x^3$-direction.  The reader can verify that the fourth geodesic can be chosen in such that equally positioned hyperspheres match appropriately and that the balancing arguments needed to apply Main Theorem 1 hold.  Hence this configuration can be glued together and perturbed in a CMC hypersurface as well.

\bibliography{sphcmc}
\bibliographystyle{amsplain}

\end{document}